\theoremstyle{plain}
\newtheorem{theorem}{Theorem}
\newtheorem{lemma}{Lemma}
\newtheorem*{theo*}{Theorem}
\newtheorem{proposition}{Proposition}
\newtheorem{corollary}{Corollary}
\theoremstyle{definition}
\newtheorem{definition}{Definition}
\newtheorem*{definition*}{Definition}
\newtheorem{example}{Example}
\newtheorem{remark}{Remark}
\newtheorem{problem}{Problem}
\def\PP{{\mathbb P}}
\def\FF{{\mathbb F}}
\def\QQ{{\mathbb Q}}
\def\KK{{\mathbb K}}
\def\ZZ{{\mathbb Z}}
\def\GG{{\mathbb G}}
\def\CC{{\mathbb C}}
\def\NN{{\mathbb N}}
\def\fu{{\mathfrak u}}
\def\div{\mathop{\rm div}}
\def\dg{\mathop{\rm deg}}
\def\Spec{\mathop{\rm Spec}}
\def\PSL{\mathop{\rm PSL}}
\def\Gr{\mathop{\rm Grass}}
\def\Aut{\mathop{\rm Aut}}
\def\WDiv{\mathop{\rm WDiv}}
\def\Cl{\mathop{\rm Cl}}
\newcommand{\CO}{{\mathcal{O}}}
\DeclareMathOperator{\rk}{rk}
\begin{document}
\sloppy
\title[Additive actions on toric varieties]
{Additive actions on toric varieties}
\author{Ivan Arzhantsev}
%\thanks{The first and the second authors were partially supported by}
\address{National Research University Higher School of Economics, Faculty
of Computer Science, Kochnovskiy Proezd 3, Moscow, 125319 Russia}
\email{arjantsev@hse.ru}
\author{Elena Romaskevich}
\address{Yandex, ulica L'va Tolstogo 16, Moscow, 119034 Russia}
\email{lena.apq@gmail.com}
\date{\today}
\begin{abstract}
By an additive action on an algebraic variety $X$ of dimension $n$ we mean a regular action $\GG_a^n\times X\to X$ with an open orbit of the commutative unipotent group $\GG_a^n$. We prove that if a complete toric variety $X$ admits an additive action, then it admits an additive action normalized by the acting torus. Normalized additive actions on a toric variety $X$ are in bijection with complete collections of Demazure roots of the fan $\Sigma_X$. Moreover, any two normalized additive actions on $X$ are isomorphic.
\end{abstract}
\subjclass[2010]{Primary 14L30, 14M25; \ Secondary 13N15, 14J50, 14M17}
\keywords{Toric variety, automorphism, unipotent group, locally nilpotent derivation, Cox ring,
Demazure root}
\maketitle
\section*{Introduction}

Let $X$ be an irreducible algebraic variety of dimension $n$ over an algebraically closed field $\KK$ of characteristic zero and let $\GG_a=(\KK,+)$ be the additive group of the field. Consider the commutative unipotent group $\GG_a^n=\GG_a\times\ldots\times\GG_a$ ($n$~times). By an additive action on $X$ we mean a regular action $\GG_a^n\times X\to X$ with an open orbit. Equivalently, one may consider algebraic varieties with an additive action as equivariant embeddings of the vector group $(\KK^n,+)$.

A systematic study of additive actions was initiated by Hassett and Tschinkel~\cite{HT}. They established a remarkable correspondence between additive actions on the projective space $\PP^n$  and local $(n+1)$-dimensional commutative associative algebras with unit. This correspondence has allowed to classify additive actions on $\PP^n$ for small $n$. The same technique was used by Sharoiko~\cite{Sh} to prove that an additive action on a non-degenerate projective quadric is unique. Further modification of Hassett-Tschinkel correspondence led to characterization of additive actions on arbitrary projective hypersurfaces, in particular, on degenerate projective quadrics~\cite{AS},~\cite{AP}.

The study of additive actions was originally motivated by problems of arithmetic geometry. Chambert-Loir and Tschinkel~\cite{CLT1} gave asymptotic formulas for the number of rational points of bounded height on smooth projective equivariant compactifications of the vector group.
More generally, asymptotic formulas for the number of rational points of bounded height on quasi-projective equivariant embeddings of the vector group are obtained in~\cite{CLT2}.

In~\cite{Ar} all generalized flag varieties $G/P$ admitting an additive action are found. Here $G$ is a semisimple linear algebraic group and $P$ is a parabolic subgroup. It turns out that if $G/P$ admits an additive action then the parabolic subgroup $P$ is maximal.

Feigin~\cite{Fe} proposed a construction based on the PBW-filtration to degenerate an arbitrary generalized flag variety $G/P$ to a variety with an additive action. Recently Fu-Hwang~\cite{FH} and Devyatov~\cite{Dev} have proved that if $G/P$ is not isomorphic to the projective space, then up to isomorphism there is at most one additive action on $G/P$. Classification of additive actions on singular del Pezzo surfaces is obtained by Derenthal and Loughran~\cite{DL}.

\smallskip

The problem of classification of additive actions on toric varieties is raised in~\cite[Section~6]{AS}. Some instructive examples of such actions are given in~\cite[Proposition~5.5]{HT}. It is natural to divide the problem into two parts. The first one deals with additive actions on a toric variety $X$ of dimension $n$ normalized by the acting torus $T$. In this case an additive action splits into $n$ pairwise commuting $\GG_a$-actions on $X$ normalized by $T$. It is proved in~\cite{De} that $\GG_a$-actions on a toric variety $X$ normalized by $T$ are in bijection with some vectors defined in terms of the fan $\Sigma_X$ associated with $X$. We call such vectors Demazure roots of a fan. Cox~\cite{Cox} observed that normalized $\GG_a$-actions on a toric
variety can be interpreted as certain $\GG_a$-subgroups of automorphisms of the Cox ring $R(X)$ of the variety $X$. In turn, such subgroups correspond to homogeneous locally nilpotent derivations of the Cox ring. In these terms the Demazure root is nothing but the degree of the derivation.

After presenting some background on $\GG_a$-actions and Demazure roots (Section~\ref{sec1}) and on toric varieties and Cox rings (Section~\ref{sec2}), we prove in Theorem~\ref{cc} that additive actions on a toric variety $X$ normalized by the acting torus $T$
are in bijection with complete collections of Demazure roots of the fan $\Sigma_X$; see Definition~\ref{deff} for a precise definition of a complete collection of Demazure roots.
In Theorem~\ref{unique} we show that any two normalized additive actions on $X$ are isomorphic.

The second part of the problem concerns non-normalized additive actions. Theorem~\ref{3con} states that if a complete toric variety admits an additive action, then it admits an additive action normalized by the acting torus.

It is well known that a toric variety is projective if and only if its fan is a normal fan of a convex polytope. In Section~\ref{sec5} we characterize polytopes corresponding to projective toric varieties with an additive action.

In the last section we give explicit examples of additive actions on toric varieties in terms of their Cox rings and formulate several open problems.

\smallskip

The authors are grateful to Evgeny Feigin for useful discussions and encouragement.

%%%%%%%%%%%%%%%%%%%%%%%%%%%%%%%%%%%%%%%%%%%%%%%%%%%%%%%%%%%%%%%%

\section{$\GG_a$-actions and Demazure roots}
\label{sec1}

Consider an irreducible affine variety $X$ with an effective action of an
algebraic torus $T$. Let $M$ be the character lattice of $T$ and $N$ be the lattice of one-parameter subgroups of $T$. Let $A=\KK[X]$ be the algebra of regular functions on $X$. It is well known that there is a bijection between faithful $T$-actions on $X$ and effective $M$-gradings on~$A$. In fact, the algebra $A$ is graded by a semigroup of lattice points in a convex polyhedral cone $\omega\subseteq M_{\QQ}=M\otimes_{\ZZ}\QQ$. We have
$$
A=\bigoplus_{m\in \omega_{ M}} A_m\chi^m,
$$
where $\omega_{M}=\omega\cap M$ and $\chi^m$ is the character of the torus $T$ corresponding to a point $m$.

A derivation $\partial$ of an algebra $A$ is said to be {\itshape locally nilpotent} (LND) if for every $f\in A$ there exists $k\in\NN$ such that $\partial^k(f)=0$. For any LND $\partial$ on $A$ the map ${\varphi_{\partial}:\GG_a\times A\rightarrow A}$, ${\varphi_{\partial}(s,f)=\exp(s\partial)(f)}$, defines a
structure of a rational $\GG_a$-algebra on $A$. This induces a regular action $\GG_a\times X\to X$, where $X=\Spec A$. In fact, any regular $\GG_a$-action on $X$ arises this way, see \cite[Section~1.5]{Fr}. A derivation $\partial$ on $A$ is said to be {\itshape homogeneous} if it respects the $M$-grading. If ${f,h\in A\backslash \ker\partial}$ are homogeneous, then
${\partial(fh)=f\partial(h)+\partial(f)h}$ is homogeneous too and
${\dg\partial(f)-\dg f=\dg\partial(h)-\dg h}$. So any homogeneous
derivation $\partial$ has a well defined {\itshape degree} given
as $\dg\partial=\dg\partial(f)-\dg f$ for any homogeneous $f\in A\backslash \ker\partial$.
It is easy to see that an LND on $A$ is homogeneous if and only if the corresponding
$\GG_a$-action is normalized by the torus~$T$ in the automorphism group~$\Aut (X)$, cf. \cite[Section~3.7]{Fr}.

Let $X$ be an affine toric variety, i.~e. a normal affine variety with an action of a torus $T$ with an open orbit. In this case
$$
A=\bigoplus_{m\in \omega_{M}}\KK\chi^m=\KK[\omega_{M}]
$$
is the semigroup algebra. Recall that for a given cone $\omega\subset M_{\QQ}$, its {\itshape dual cone} is defined by
$$
\sigma=\{p\in N_{\mathbb{Q}}\,|\,\langle p,v\rangle\geqslant0\,\,\,\forall v\in\omega\},
$$
where $\langle,\rangle$ is the pairing between dual spaces $N_{\mathbb{Q}}$ and $M_{\mathbb{Q}}$. Let $\sigma(1)$ be the set of rays of a cone $\sigma$ and $p_{\rho}$ be the primitive lattice vector on a ray $\rho$. For $\rho\in\sigma(1)$ we set
$$
\mathfrak{R}_{\rho}:=\{e\in M\,|\, \langle p_{\rho},e\rangle=-1
\,\,\mbox{and}\,\, \langle p_{\rho'},e\rangle\geqslant0
\,\,\,\,\forall\,\rho'\in \sigma(1), \,\rho'\ne\rho\}.
$$
One easily checks that the set $\mathfrak{R}_{\rho}$ is infinite for each $\rho\in\sigma(1)$. The elements of the set $\mathfrak{R}:=\bigsqcup\limits_{\rho}\mathfrak{R}_{\rho}$ are called the {\itshape Demazure roots} of the cone $\sigma$. Let $e\in\mathfrak{R}_{\rho}$. Then $\rho$ is called the {\itshape distinguished ray} of the root $e$. One can define the homogeneous LND on the algebra $A$ by the rule
$$
\partial_e(\chi^m)=\langle p_{\rho},m\rangle\chi^{m+e}.
$$
In fact, every homogeneous LND on~$A$ has a form $\alpha\partial_e$ for some $\alpha\in
\KK,\, e\in \mathfrak{R}$, see \cite[Theorem~2.7]{L1}. In other words, $\GG_a$-actions on $X$
normalized by the acting torus are in bijection with Demazure roots of the cone $\sigma$.

\begin{example} \label{ex1}
Consider $X=\KK^n$ with the standard action of the torus $(\KK^{\times})^n$.
It is a toric variety with the cone $\sigma=\QQ^n_{\geqslant0}$ having rays
$\rho_i=\langle p_i\rangle_{\QQ_{\geqslant0}}$ with
$p_1=(1,0,\ldots,0),\ldots,p_n=(0,\ldots,0,1)$.
The dual cone $\omega$ is $\QQ^n_{\geqslant0}$ as well. In this case we have
$$
\mathfrak{R}_{\rho_i}=\{(c_1,\ldots,c_{i-1},-1,c_{i+1},\ldots,c_n)\,|\,c_j\in\ZZ_{\geqslant0}\},
$$
\vspace{0.05cm}
\begin{center}
\begin{picture}(100,75)
\multiput(50,15)(15,0){5}{\circle*{3}}
\multiput(35,30)(0,15){4}{\circle*{3}}
\put(20,30){\vector(1,0){100}} \put(50,5){\vector(0,1){80}}
\put(17,70){$\mathfrak{R}_{\rho_1}$} \put(115,7){$\mathfrak{R}_{\rho_2}$}
\put(100,70){$M_{\mathbb{Q}}=\mathbb{Q}^2$} \linethickness{0.5mm}
\put(50,30){\line(1,0){65}} \put(50,30){\line(0,1){50}}
\end{picture}
\end{center}
where $c_j=\langle p_j,e\rangle$. Denote
$x_1=\chi^{(1,0,\ldots,0)},\ldots,x_n=\chi^{(0,\ldots,0,1)}$. Then
$\KK[X]=\KK[x_1,\ldots,x_n]$. It is easy to see that the homogeneous LND corresponding to the root
$e=(c_1,\ldots,c_n)\in \mathfrak{R}_{\rho_i}$ is
$$
\partial_e=x_1^{c_1}\ldots x_{i-1}^{c_{i-1}} x_{i+1}^{c_{i+1}}\ldots x_n^{c_n}\frac{\partial}{\partial x_i}. \eqno (*)
$$

This LND gives rise to the $\GG_a$-action
$$
x_i\mapsto x_i+sx_1^{c_1}\ldots x_{i-1}^{c_{i-1}} x_{i+1}^{c_{i+1}}\ldots x_n^{c_n}, \quad
x_j\mapsto x_j, \quad j\ne i, \quad s\in\GG_a.
$$
\end{example}

%%%%%%%%%%%%%%%%%%%%%%%%%%%%%%%%%%%%%%%%%%%%%%%%%%%%%%%%%%%%%%%%

\section{Toric varieties and Cox rings}
\label{sec2}

We keep notation of Section~\ref{sec1}. Let $X$ be a toric variety of dimension $n$ with an
acting torus $T$ and $\Sigma$ be the corresponding fan of convex polyhedral cones in $N_{\QQ}$; see~\cite{Fu} or \cite{CLS} for details.

Let $\Sigma(1)$ be the set of rays of the fan $\Sigma$ and $p_{\rho}$ be the primitive lattice vector on a ray $\rho$. For $\rho\in\Sigma(1)$ we consider the set $\mathfrak{R}_{\rho}$ of all vectors $e\in M$ such that
\begin{enumerate}
\item[(1)]
$\langle p_{\rho},e\rangle=-1\,\,\mbox{and}\,\, \langle p_{\rho'},e\rangle\geqslant0
\,\,\,\,\forall\,\rho'\in \Sigma(1), \,\rho'\ne\rho$;
\smallskip
\item[(2)]
if $\sigma$ is a cone of $\Sigma$ and $\langle v,e\rangle=0$ for all $v\in\sigma$, then the cone generated by $\sigma$ and $\rho$ is in $\Sigma$ as well.
\end{enumerate}

Note that~(1) implies~(2) if $\Sigma$ is a fan with convex support. This is the case if $X$ is affine or complete.

The elements of the set $\mathfrak{R}:=\bigsqcup\limits_{\rho}\mathfrak{R}_{\rho}$ are called the {\itshape Demazure roots} of the fan $\Sigma$, cf.~\cite[D\'efinition~4]{De} and \cite[Section~3.4]{Oda}.
Again elements $e\in\mathfrak{R}$ are in bijection with $\GG_a$-actions on $X$ normalized by the acting torus, see~\cite[Th\'eoreme~3]{De} and \cite[Proposition~3.14]{Oda}. If $X$ is affine, the
$\GG_a$-action given by a Demazure root~$e$ coincides with the action corresponding to the locally nilpotent derivation $\partial_e$ of the algebra $\KK[X]$ as defined in Section~\ref{sec1}.
Let us denote by $H_e$ the image in $\Aut(X)$ of the group $\GG_a$ under this action. Thus $H_e$
is a one-parameter unipotent subgroup normalized by $T$ in $\Aut(X)$.

We recall basic facts from toric geometry. There is a bijection between cones $\sigma\in\Sigma$ and $T$-orbits $\CO_{\sigma}$ on $X$ such that $\sigma_1\subseteq\sigma_2$ if and only if $\CO_{\sigma_2}\subseteq\overline{\CO_{\sigma_1}}$. Here $\dim\CO_{\sigma}=n-\dim\langle\sigma\rangle$. Moreover, each cone $\sigma\in\Sigma$ defines an open affine $T$-invariant subset $U_{\sigma}$ on $X$ such that $\CO_{\sigma}$ is a unique closed $T$-orbit on $U_{\sigma}$ and $\sigma_1\subseteq\sigma_2$ if and only if $U_{\sigma_1}\subseteq U_{\sigma_2}$.

Let $\rho_e$ be the distinguished ray corresponding to a root $e$, $p_e$ be the primitive lattice vector on $\rho_e$, and $R_e$ be the one-parameter subgroup of $T$ corresponding to $p_e$. Denote by $X^{H_e}$ the set of $H_e$-fixed points on $X$.

\smallskip

Our aim is to describe the action of $H_e$ on $X$.

\begin{proposition} \label{phe}
For every point $x\in X\setminus X^{H_e}$ the orbit $H_ex$ meets exactly two $T$-orbits $\CO_1$ and $\CO_2$ on $X$, where $\dim\CO_1=\dim\CO_2+1$. The intersection $\CO_2\cap H_ex$ consists of a single point, while
$$
\CO_1\cap H_ex=R_ey \quad \text{for any} \quad y\in\CO_1\cap H_ex.
$$
\end{proposition}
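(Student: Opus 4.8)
The plan is to reduce everything to an explicit computation on one affine chart, where the action looks like that of Example~\ref{ex1}. Write $\rho:=\rho_e$ and let $\tau\in\Sigma$ be the cone with $x\in\CO_\tau$. For any $\sigma\in\Sigma$ with $\rho\in\sigma(1)$ the chart $U_\sigma$ is $H_e$-invariant: the formula $\partial_e(\chi^m)=\langle p_\rho,m\rangle\chi^{m+e}$ defines an LND of $\KK[U_\sigma]=\KK[\sigma^\vee\cap M]$ (if $\langle p_\rho,m\rangle\ne 0$ and $m\in\sigma^\vee\cap M$, then $m+e\in\sigma^\vee$, since $\langle p_{\rho'},m+e\rangle\ge 0$ for $\rho'\ne\rho$ and $\langle p_\rho,m+e\rangle=\langle p_\rho,m\rangle-1\ge 0$, and $\partial_e^k(\chi^m)=0$ for $k>\langle p_\rho,m\rangle$), and the resulting $\GG_a$-action agrees with $H_e|_{U_\sigma}$ because both are normalized by $T$ and restrict to the same action on the open orbit. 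Hence $X_\rho:=\bigcup\{U_\sigma:\rho\in\sigma(1)\}$ is open and $H_e$-invariant. The crucial step is to show that $X\setminus X_\rho\subseteq X^{H_e}$; granting this, a point $x\notin X^{H_e}$ lies in some $U_\sigma$ with $\rho\in\sigma(1)$, so $H_ex\subseteq U_\sigma$, and we may replace $X$ by the affine toric variety $U_\sigma$, with $x\in\CO_\tau$ and $\tau$ a face of $\sigma$.

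To prove $X\setminus X_\rho\subseteq X^{H_e}$, recall that since $H_e$ is connected unipotent and the characteristic is zero, a point $y$ is $H_e$-fixed if and only if the vector field $\partial_e$ vanishes at $y$, i.e. $\partial_e(\chi^m)(y)=0$ for all $m$; here $\partial_e$ is a derivation of $\KK[U_{\sigma'}]$ for every $\sigma'\in\Sigma$, whether or not it is locally nilpotent there. Assume $y\in\CO_\tau$ with $\tau\cup\rho$ contained in no cone of $\Sigma$ — this is precisely the condition $y\notin X_\rho$, and it forces $\rho\notin\tau(1)$ — and assume $y$ is not $H_e$-fixed. Pick a cone $\sigma$ of $\Sigma$ containing $\tau$ as a face and an $m\in\sigma^\vee\cap M$ with $\langle p_\rho,m\rangle\ne 0$ and $\chi^{m+e}(y)\ne 0$, i.e. $m+e\in\tau^\perp$. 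Pairing $m+e$ with the rays of $\tau$, all of which differ from $\rho$, and using $e\in\mathfrak{R}_{\rho}$ and $m\in\sigma^\vee$, one obtains $\langle p_{\rho'},e\rangle=0$ for every $\rho'\in\tau(1)$; hence $e\in\tau^\perp$, and then also $m\in\tau^\perp$. Now $G:=\{v\in\sigma:\langle v,m\rangle=\langle v,m+e\rangle=0\}$ is a face of $\sigma$ containing $\tau$ with $\langle v,e\rangle=0$ for all $v\in G$, so condition~(2) in the definition of a Demazure root gives $\tau\cup\rho\subseteq G+\rho\in\Sigma$, a contradiction.

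On the affine chart $U_\sigma$ put $g_s:=\exp(s\partial_e)$, so that $\chi^m(g_s\cdot x)=\sum_{k\ge 0}\binom{\langle p_\rho,m\rangle}{k}s^k\,\chi^{m+ke}(x)$, where $\chi^{m'}(x)$ is a nonzero scalar for $m'\in\tau^\perp$ and is $0$ otherwise. The pairing argument above, applied to an $m$ witnessing $x\notin X^{H_e}$, yields $\langle p_{\rho'},e\rangle=0$ for all $\rho'\in\tau(1)\setminus\{\rho\}$. Substituting this into the displayed sum and distinguishing the cases $\rho\in\tau(1)$ and $\rho\notin\tau(1)$, a short computation shows that there is exactly one parameter value $s_0$ (namely $s_0=0$ in the first case and $s_0=-\chi^e(x)^{-1}$ in the second, the value being taken on the torus $\CO_\tau$) for which $g_{s_0}\cdot x$ leaves $\CO_\tau$: it then lies in $\CO_{\tau_0}$ with $\tau_0=\tau\cap e^\perp$ in the first case (so $\dim\CO_{\tau_0}=\dim\CO_\tau+1$) and with $\tau_0=\tau+\rho\in\Sigma$ in the second (so $\dim\CO_{\tau_0}=\dim\CO_\tau-1$, the cone $\tau+\rho$ lying in $\Sigma$ by condition~(2) again). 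In either case the two $T$-orbits met by $H_ex$ are $\CO_\tau$ and $\CO_{\tau_0}$, their dimensions differ by one, $H_ex$ meets the smaller one only in the point $g_{s_0}\cdot x$, and it meets the larger one in $\{g_s\cdot x:s\ne s_0\}$.

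Finally, since $R_e$ is the one-parameter subgroup of $T$ with cocharacter $p_\rho$, an element $r(\mu)\in R_e$ acts by $\chi^m(r(\mu)\cdot z)=\mu^{\langle p_\rho,m\rangle}\chi^m(z)$; comparing with the formula for $\chi^m(g_s\cdot x)$ shows that $r(\mu)\cdot g_s\cdot x=g_{s'}\cdot x$ for $s\ne s_0$, where $s'$ depends affine-linearly on $s$ with nonzero linear coefficient. Hence $\{g_s\cdot x:s\ne s_0\}$ is a single $R_e$-orbit, so it equals $R_ey$ for every $y$ in it, which is the remaining assertion. The only genuine obstacle is the reduction step $X\setminus X_\rho\subseteq X^{H_e}$; everything after it is bookkeeping in the semigroup algebra, and one could alternatively carry the same computation out in the Cox ring $R(X)$, where $\partial_e$ is the monomial derivation $\bigl(\prod_{\rho'\ne\rho}x_{\rho'}^{\langle p_{\rho'},e\rangle}\bigr)\,\partial/\partial x_\rho$ of a polynomial ring exactly as in Example~\ref{ex1}, and then descend to $X$.
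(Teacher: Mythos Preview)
Your argument is correct and follows exactly the paper's two-step strategy: first reduce to an affine chart by showing that each $U_\sigma$ with $\rho_e\in\sigma(1)$ is $H_e$-invariant and that the complement $X\setminus X_\rho$ lies in $X^{H_e}$, then carry out the explicit orbit computation on that chart. The only difference is that the paper outsources both steps to the literature (\cite[Proposition~3.14]{Oda} for the reduction and \cite[Proposition~2.1]{AKZ} for the affine case), whereas you supply the details directly---in particular your use of condition~(2) in the definition of a Demazure root to force $\tau+\rho\in\Sigma$ is precisely what underlies the Oda reference.
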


\begin{proof}
It follows from the proof of \cite[Proposition~3.14]{Oda} that the affine charts $U_{\sigma}$, where $\sigma\in\Sigma$ is a cone containing $\rho_e$, are $H_e$-invariant, and the complement of their union is contained in $X^{H_e}$. This reduces the proof to the case $X$ is affine. Then the assertion is proved in \cite[Proposition~2.1]{AKZ}.
\end{proof}

A pair of $T$-orbits $(\CO_1,\CO_2)$ on $X$ is said to be {\itshape $H_e$-connected} if
$H_ex\subseteq \CO_1\cup\CO_2$ for some $x\in X\setminus X^{H_e}$. By Proposition~\ref{phe}, $\CO_2\subseteq\overline{\CO_1}$ for such a pair (up to permutation) and $\dim\CO_1=\dim\CO_2+1$. Since the torus normalizes the subgroup $H_e$, any point of
$\CO_1\cup\CO_2$ can actually serve as a point $x$.

\begin{lemma} \label{lpe}
A pair of $T$-orbits $(\CO_{\sigma_1},\CO_{\sigma_2})$ is $H_e$-connected if and only if
$e|_{\sigma_2}\le 0$ and $\sigma_1$ is a facet of $\sigma_2$ given by the equation
$\langle v,e\rangle=0$.
\end{lemma}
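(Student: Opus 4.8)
The statement is an iff about when a pair of torus orbits $(\CO_{\sigma_1},\CO_{\sigma_2})$ is $H_e$-connected, so the plan is to reduce to the affine situation and then read off the combinatorics from the explicit action. By the remark following Proposition~\ref{phe}, an $H_e$-connected pair satisfies (after permutation) $\CO_{\sigma_2}\subseteq\overline{\CO_{\sigma_1}}$ and $\dim\CO_{\sigma_1}=\dim\CO_{\sigma_2}+1$, which by the orbit--cone correspondence means $\sigma_1\subseteq\sigma_2$ and $\dim\langle\sigma_1\rangle=\dim\langle\sigma_2\rangle-1$, i.e. $\sigma_1$ is a facet of $\sigma_2$. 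So the real content is: (a) the facet $\sigma_1$ must be cut out by the hyperplane $\langle v,e\rangle=0$, and (b) $e$ must be nonpositive on all of $\sigma_2$.

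For the forward direction, I would first invoke the argument recalled in the proof of Proposition~\ref{phe}: the charts $U_\sigma$ with $\rho_e\subseteq\sigma$ are $H_e$-invariant and the rest of $X$ is fixed pointwise by $H_e$. Hence if $(\CO_{\sigma_1},\CO_{\sigma_2})$ is $H_e$-connected, both $\sigma_1$ and $\sigma_2$ must contain $\rho_e$; pick any cone $\sigma$ of $\Sigma$ that contains $\sigma_2$ as a face together with $\rho_e$ (e.g.\ $\sigma_2$ itself), and pass to the affine toric variety $U_\sigma$. On an affine toric variety the LND is the explicit $\partial_e(\chi^m)=\langle p_{\rho_e},m\rangle\chi^{m+e}$, whose exponential sends $\chi^m\mapsto \chi^m\exp(s\langle p_{\rho_e},m\rangle\chi^e)$; the closure relation together with a direct computation of which $T$-orbit $U_\sigma$ a generic point of $\CO_{\sigma_2}$ flows into forces exactly the two stated conditions. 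Concretely, the orbit $\CO_{\sigma_2}$ corresponds to the face $\sigma_2\preceq\sigma$, and $H_e$ moves a point of $\CO_{\sigma_2}$ into the orbit of the face $\sigma_2\cap e^\perp$; for this to be a face of codimension one inside $\sigma_2$ one needs $e^\perp\cap\sigma_2$ to be a facet, and for the flow to actually leave $\CO_{\sigma_2}$ (rather than fixing it, as happens when $e$ takes positive values) one needs $e\le 0$ on $\sigma_2$.

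For the converse, assume $\sigma_1$ is the facet of $\sigma_2$ cut out by $\langle v,e\rangle=0$ and $e|_{\sigma_2}\le0$. Condition~(1) defining a Demazure root gives $\langle p_{\rho_e},e\rangle=-1<0$, so $\rho_e$ is not contained in $e^\perp$; combined with $e|_{\sigma_2}\le 0$ this shows $\rho_e$ is one of the rays of $\sigma_2$ not lying on the facet $\sigma_1$, hence $\rho_e\subseteq\sigma_2$ but $\rho_e\not\subseteq\sigma_1$. Then I would work in $U_{\sigma_2}$, take a point $x\in\CO_{\sigma_1}$, and compute $H_e x$ explicitly via the exponential of $\partial_e$: because $e$ is nonpositive on $\sigma_2$ and vanishes exactly on $\sigma_1$, the orbit $H_e x$ starts in $\CO_{\sigma_1}$ (at $s=0$) and for $s\ne 0$ lands in $\CO_{\sigma_2}$, so the pair is $H_e$-connected. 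The dimension count from Proposition~\ref{phe} is automatic since a facet drops dimension by one.

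**Main obstacle.** The routine-looking but genuinely delicate point is the precise bookkeeping on an affine chart identifying, for a point in $\CO_{\sigma_2}$, exactly which orbit $H_e$ carries it to, and checking that the sign condition $e|_{\sigma_2}\le 0$ (as opposed to merely $\langle p_{\rho_e},e\rangle=-1$) is what distinguishes "connected" from "$x$ is $H_e$-fixed." In other words, the heart of the matter is the case analysis on the sign of $e$ on the various rays of $\sigma_2$, using the formula for $\exp(s\partial_e)$ on semigroup generators and the description of $T$-orbits of $U_{\sigma_2}$ via faces of $\sigma_2$; once that is set up, the facet-and-hyperplane statement falls out, and the global statement follows from the $H_e$-invariance of the charts $U_\sigma\ni\rho_e$ already used in Proposition~\ref{phe}.
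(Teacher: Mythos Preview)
Your approach is essentially the paper's: reduce to an affine chart via the $H_e$-invariance of the $U_\sigma$ with $\rho_e\subseteq\sigma$ (exactly the argument from Proposition~\ref{phe}), and then settle the affine case. The paper does not spell out the affine computation but simply cites \cite[Lemma~2.2]{AKZ}; your sketch of that computation is along the right lines.

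Two small slips to fix. First, in the forward direction you assert that ``both $\sigma_1$ and $\sigma_2$ must contain $\rho_e$''. This is false for $\sigma_1$ (and you yourself note later that $\rho_e\not\subseteq\sigma_1$); what the chart argument actually gives is a cone $\sigma\in\Sigma$ with $\rho_e\subseteq\sigma$ and $\sigma_1\subseteq\sigma_2\subseteq\sigma$, so you should work in $U_\sigma$, not necessarily $U_{\sigma_2}$. Second, the exponential of $\partial_e$ on $\chi^m$ is $\chi^m(1+s\chi^e)^{\langle p_{\rho_e},m\rangle}$, not $\chi^m\exp(s\langle p_{\rho_e},m\rangle\chi^e)$; this is the formula you actually need to read off which face of $\sigma$ the flow lands in. Neither issue affects the strategy.
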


\begin{proof} The proof again reduces to the affine case, where the assertion is \cite[Lemma~2.2]{AKZ}.
\end{proof}

\smallskip

Now we recall basic ingredients of the Cox construction, see~\cite[Chapter~1]{ADHL} for more details. Let $X$ be a normal variety with free finitely generated divisor class group $\Cl(X)$ and only constant invertible regular functions. Denote by $\WDiv(X)$ the group of Weil divisors on $X$ and fix a subgroup $K \subseteq \WDiv(X)$ which maps onto $\Cl(X)$ isomorphically. The {\itshape Cox ring} of the variety $X$ is defined as
$$
R(X)=\bigoplus_{D\in K} H^0(X,D),
$$
where $H^0(X,D)=\{f\in\KK(X)^{\times}\mid \div(f)+D\geqslant0\}\cup\{0\}$ and multiplication on homogeneous components coincides with multiplication in the field of rational functions $\KK(X)$ and extends to $R(X)$ by linearity. It is easy to see that up to isomorphism the graded ring
$R(X)$ does not depend on the choice of the subgroup $K$.

\begin{example}
It is proved in \cite{Cox} that if $X$ is toric, then $R(X)$ is a polynomial algebra
$\KK[x_1,\ldots,x_m]$, where the variables $x_i$ correspond to $T$-invariant prime divisors
$D_i$ on $X$ or, equivalently, to the rays $\rho_i$ of the fan $\Sigma_X$. The $\Cl(X)$-grading on $R(X)$ is given by $\dg(x_i)=[D_i]$.
\end{example}

Suppose that the Cox ring $R(X)$ is finitely generated. Then ${\overline{X}:=\Spec R(X)}$ is a normal affine variety with an action of the torus $H_X := \Spec\KK[\Cl(X)]$. There is an open $H_X$-invariant subset $\widehat{X}\subseteq \overline{X}$ such that the complement $\overline{X}\backslash\widehat{X}$ is of codimension at least two in $\overline{X}$,
there exists a good quotient $p_X\colon\widehat{X}\rightarrow\widehat{X}/\!/H_{X}$, and the quotient space $\widehat{X}/\!/H_{X}$ is isomorphic to $X$, see \cite[Construction~1.6.3.1]{ADHL}. So we have the following diagram
$$
\begin{CD}
\widehat{X} @>{i}>> \overline{X}=\Spec R(X)\\
@VV{/\!/H_{X}}V  \\
X
\end{CD}
$$
If $X$ is toric, then $\overline{X}$ is isomorphic to $\KK^m$, and $\overline{X}\setminus\widehat{X}$ is a union of some coordinate planes in $\KK^m$ of codimension at least two~\cite{Cox}.

\smallskip

By~\cite[Theorem~4.2.3.2]{ADHL}, any $\GG_a$-action on a variety $X$ can be lifted to a $\GG_a$-action on the variety $\overline{X}$ commuting with the action of the torus $H_X$.

If $X$ is toric and a $\GG_a$-action is normalized by the acting torus $T$, then the lifted
$\GG_a$-action on $\KK^m$ is normalized by the diagonal torus $(\KK^{\times})^m$. Conversely,
any $\GG_a$-action on $\KK^m$ normalized by the torus $(\KK^{\times})^m$ and commuting
with the subtorus $H_X$ induces a $\GG_a$-action on $X$. This shows that $\GG_a$-actions on $X$
normalized by $T$ are in bijection with locally nilpotent derivations of the Cox ring
$\KK[x_1,\ldots,x_m]$ that are homogeneous with respect to the standard grading by the lattice
$\ZZ^m$ and have degree zero with respect to the $\Cl(X)$-grading.

%%%%%%%%%%%%%%%%%%%%%%%%%%%%%%%%%%%%%%%%%%%%%%%%%%%%%%%%%%%%%%%

\section{Normalized additive actions}
\label{sec3}

Let $X$ be an irreducible algebraic variety of dimension $n$. Consider a commutative unipotent
algebraic group $\GG_a^n=\GG_a\times\ldots\times\GG_a$ ($n$~times).

\begin{definition}
An {\it additive action} on a variety $X$ is a regular action $\GG_a^n\times X\to X$ with an open orbit.
\end{definition}

Let $X$ be a normal variety admitting an additive action. Then $X$ contains an open $\GG_a^n$-orbit $W$ isomorphic to the affine space $\KK^n$. By~\cite[Lemma~1]{APS}, any invertible function on $X$ is constant and the divisor class group $\Cl(X)$ is freely generated by classes $[D_1],\ldots,[D_l]$ of the prime divisors such that $X\setminus W=D_1\cup\ldots\cup D_l$.
In particular, the Cox ring $R(X)$ introduced in Section~\ref{sec2} is well defined for such a variety $X$.

Now we assume that $X$ is toric and an additive action $\GG_a^n\times X\to X$ is normalized by the acting torus $T$. Then the group $\GG_a^n$ is a direct product of $n$ subgroups $\GG_a$ each normalized by~$T$. They correspond to pairwise commuting homogeneous locally nilpotent derivations on the Cox ring $\KK[x_1,\ldots,x_m]$ having degree zero with respect to the $\Cl(X)$-grading.
In turn, such derivations up to scalar are in bijection with Demazure roots of the fan $\Sigma_X$.

Consider a set of homogeneous derivations $\partial_{e}$ of the polynomial algebra $\KK[x_1,\ldots,x_m]$ corresponding to some Demazure roots $e$ of the fan $\Sigma_X$.

\begin{lemma} \label{com}
Derivations $\partial_e$ and $\partial_{e'}$ commute if and only if either
$\rho_e=\rho_{e'}$ or $\langle p_e,e'\rangle=\langle p_{e'},e\rangle~=~0$.
\end{lemma}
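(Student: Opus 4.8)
The plan is to compute the commutator $[\partial_e,\partial_{e'}]$ explicitly on the monomial generators $\chi^m$ of the polynomial ring $\KK[x_1,\dots,x_m]=\bigoplus_m \KK\chi^m$, using the formula $\partial_e(\chi^m)=\langle p_e,m\rangle\chi^{m+e}$ from Section~\ref{sec1} (here $p_e=p_{\rho_e}$). A direct calculation gives
$$
\partial_e\partial_{e'}(\chi^m)=\langle p_{e'},m\rangle\langle p_e,m+e'\rangle\chi^{m+e+e'},
$$
and subtracting the symmetric expression yields
$$
[\partial_e,\partial_{e'}](\chi^m)=\bigl(\langle p_{e'},m\rangle\langle p_e,e'\rangle-\langle p_e,m\rangle\langle p_{e'},e\rangle\bigr)\chi^{m+e+e'}.
$$
So the derivations commute if and only if the linear-in-$m$ function $\langle p_e,e'\rangle\langle p_{e'},m\rangle-\langle p_{e'},e\rangle\langle p_e,m\rangle$ vanishes on all $m$ for which $\chi^{m+e+e'}$ actually occurs, i.e. for all $m$ in a suitable affine sublattice (or: on enough lattice points to conclude it vanishes identically, since the coefficient ring is a domain).

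Next I would translate this vanishing condition into the stated dichotomy. The linear form $\langle p_e,e'\rangle\, p_{e'}-\langle p_{e'},e\rangle\, p_e$ must be zero as an element of $N_\QQ$. If $\rho_e=\rho_{e'}$, then $p_e=p_{e'}$ and by the defining property (1) of Demazure roots $\langle p_e,e\rangle=\langle p_{e'},e'\rangle=-1$, so the form equals $-p_e+p_{e'}=0$; hence the derivations commute. If $\rho_e\neq\rho_{e'}$, the primitive vectors $p_e$ and $p_{e'}$ lie on distinct rays of the fan, hence are linearly independent over $\QQ$; therefore the form vanishes precisely when both scalar coefficients $\langle p_e,e'\rangle$ and $\langle p_{e'},e\rangle$ are zero, which is the second alternative in the lemma. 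One small point to check is that the monomials $\chi^{m+e+e'}$ appearing with nonzero coefficient do range over a set of lattice points affinely spanning $M_\QQ$, so that vanishing of the coefficient on this set forces the linear form to vanish on all of $N_\QQ$; this is immediate since $\partial_e,\partial_{e'}$ are nonzero and $M$ has full rank, so one can just test on $m,m+f_1,\dots,m+f_n$ for a basis $f_1,\dots,f_n$ of $M$ and a fixed $m$ with $\langle p_e,m\rangle\langle p_{e'},m\rangle\ne 0$.

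I do not expect a serious obstacle here: the argument is a short bilinear computation followed by the elementary observation that two vectors on distinct rays of a fan are linearly independent. The only thing requiring mild care is bookkeeping of which $m$'s contribute — one should note that $\partial_e\partial_{e'}(\chi^m)$ and $\partial_{e'}\partial_e(\chi^m)$ have the \emph{same} degree $m+e+e'$, so the commutator is again a homogeneous ``derivation-like'' operator of degree $e+e'$, and vanishing of its single coefficient function is exactly the condition analyzed above. (Alternatively, one can phrase the whole computation inside $\Aut(\KK^m)$ as in Section~\ref{sec2} and appeal to the explicit form $(*)$ of $\partial_e$ from Example~\ref{ex1}, but the monomial computation is cleaner.)
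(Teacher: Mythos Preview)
Your approach is essentially the paper's: both are the ``direct computation with formula~$(*)$'' that the paper invokes, the only difference being that the paper checks $\partial_e\partial_{e'}(x_j)=\partial_{e'}\partial_e(x_j)$ on the generators $x_j$ (legitimate since the commutator of two derivations is again a derivation) while you evaluate on all monomials.

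There is one small wrinkle to fix. Your sentence ``the primitive vectors $p_e$ and $p_{e'}$ lie on distinct rays of the fan, hence are linearly independent over $\QQ$'' is not literally true for the fan $\Sigma_X$: opposite rays occur already for $\PP^1$ or $\PP^1\times\PP^1$, and then $p_e=-p_{e'}$. This does not break the argument, but you should say why. Two clean fixes: (i)~carry out the computation in the Cox ring $\KK[x_1,\dots,x_m]$ viewed as the affine toric variety $\KK^m$, where the relevant primitive vectors are the standard basis vectors of $\ZZ^m$ and are therefore genuinely independent whenever $\rho_e\ne\rho_{e'}$; or (ii)~keep your formulation and note that if $p_e=-p_{e'}$ then your vanishing condition becomes $\langle p_e,e'\rangle+\langle p_{e'},e\rangle=0$, while the Demazure-root inequalities give $\langle p_e,e'\rangle\ge 0$ and $\langle p_{e'},e\rangle\ge 0$, forcing both to vanish. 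Either way the conclusion stands.
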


\begin{proof}
We have $\partial_e\partial_{e'}=\partial_{e'}\partial_e$ if and only if $\partial_e\partial_{e'}(x_j)=\partial_{e'}\partial_e(x_j)$ for all $j=1,\ldots,n$. Now the lemma follows from a direct computation with formula~(*).
\end{proof}

\begin{definition} \label{deff}
A set $e_1,\ldots,e_n$ of Demazure roots of a fan $\Sigma$ of dimension $n$ is called a {\it complete collection} if $\langle p_i,e_j\rangle=-\delta_{ij}$ for all $1\le i,j\le n$.
\end{definition}

In this case, the vectors $p_1,\ldots,p_n$ form a basis of the lattice $N$, and $-e_1,\ldots,-e_n$ is the dual basis of the dual lattice $M$.

\smallskip

The following result may be considered as a combinatorial description of normalized additive actions on toric varieties.

\begin{theorem} \label{cc}
Let $X$ be a toric variety. Then additive actions on $X$ normalized by the acting torus $T$
are in bijection with complete collections of Demazure roots of the fan $\Sigma_X$.
\end{theorem}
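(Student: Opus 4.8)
The plan is to establish the bijection by going through the Cox ring. By the discussion at the end of Section~\ref{sec2}, a $\GG_a^n$-action on $X$ normalized by $T$ corresponds to an $n$-tuple of pairwise commuting homogeneous locally nilpotent derivations $\partial_{e_1},\ldots,\partial_{e_n}$ of $\KK[x_1,\ldots,x_m]$ of degree zero with respect to the $\Cl(X)$-grading, i.e.\ to an $n$-tuple of Demazure roots $e_1,\ldots,e_n$ of $\Sigma_X$ that pairwise commute in the sense of Lemma~\ref{com}. So I must show: such an $n$-tuple yields an action with an \emph{open} orbit if and only if $\{e_1,\ldots,e_n\}$ is a complete collection in the sense of Definition~\ref{deff}. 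The two remaining points are then (a) that distinct complete collections give distinct (hence non-equal) actions, which is immediate since the derivations $\partial_{e_j}$ determine the $e_j$, and (b) that every complete collection does produce commuting derivations, which follows from Lemma~\ref{com}: if $\langle p_i,e_j\rangle=-\delta_{ij}$ then for $i\ne j$ we have $\langle p_{e_i},e_j\rangle=\langle p_i,e_j\rangle=0$ and symmetrically $\langle p_{e_j},e_i\rangle=0$, so $\partial_{e_i}$ and $\partial_{e_j}$ commute.

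\textbf{Openness $\Leftarrow$ completeness.} Suppose $e_1,\ldots,e_n$ is a complete collection. As noted after Definition~\ref{deff}, $p_1,\ldots,p_n$ is a $\ZZ$-basis of $N$ and $-e_1,\ldots,-e_n$ the dual basis of $M$; in particular the rays $\rho_{e_1},\ldots,\rho_{e_n}$ are distinct, so there are exactly $n$ relevant variables among $x_1,\ldots,x_m$, say $x_1,\ldots,x_n$, with $\rho_i=\rho_{e_i}$. Choosing coordinates so that $p_i$ is the $i$-th standard basis vector of $N$, formula~$(*)$ of Example~\ref{ex1} shows that each $\partial_{e_i}$, when restricted to the invariant affine chart $U=U_{\sigma}$ for the cone $\sigma$ spanned by $\rho_1,\ldots,\rho_n$ (which is a smooth chart $\cong\KK^n$ by completeness of the collection, condition~(2) being automatic here), has the form $\partial/\partial x_i$ up to a monomial in the other coordinates not involving $x_i$. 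These $n$ commuting vector fields are linearly independent at a general point of $U$, so the $\GG_a^n$-orbit of a general point of $U$ is $n$-dimensional, hence open. Thus $\{e_1,\ldots,e_n\}$ defines an additive action normalized by $T$.

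\textbf{Openness $\Rightarrow$ completeness.} Conversely, start from pairwise commuting roots $e_1,\ldots,e_n$ whose associated $\GG_a^n$-action has an open orbit. Using Lemma~\ref{com}, the relation "$\rho_{e_i}=\rho_{e_j}$ or $\langle p_{e_i},e_j\rangle=\langle p_{e_j},e_i\rangle=0$" holds for every pair; I want to deduce that all the $\rho_{e_i}$ are in fact distinct and that $\langle p_i,e_j\rangle=-\delta_{ij}$. Here the key input is the open orbit: the span of the vector fields $\partial_{e_1},\ldots,\partial_{e_n}$ at a general point must be $n$-dimensional. If two roots shared a distinguished ray, $\rho_{e_i}=\rho_{e_j}=\rho$, then both corresponding $\GG_a$-subgroups of $\Aut(X)$ would move points only along the single $T$-orbit pair attached to $\rho$ in the sense of Proposition~\ref{phe}, and in the affine chart $U_{\sigma}$ over the cone $\rho$ the two derivations $\partial_{e_i},\partial_{e_j}$ both act as (monomial)$\cdot\partial/\partial x_\rho$, so they are proportional at every point and cannot contribute two independent directions; passing to a suitable $n$-dimensional chart and counting, the orbit would have dimension $<n$. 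Hence the $n$ distinguished rays $\rho_1,\ldots,\rho_n$ are pairwise distinct, and for $i\ne j$ the commutation forces $\langle p_i,e_j\rangle=\langle p_{e_i},e_j\rangle=0$; combined with $\langle p_i,e_i\rangle=-1$ (which holds for any Demazure root and its distinguished ray) this gives exactly $\langle p_i,e_j\rangle=-\delta_{ij}$, i.e.\ a complete collection. The main obstacle I anticipate is this last implication: making precise the "dimension count" that turns an open orbit into distinctness of the distinguished rays, which is most cleanly done by localizing to the smooth affine chart $U_\sigma$ where the generic orbit must already be open, writing the derivations via $(*)$, and observing that the Jacobian of the $n$ vector fields degenerates identically unless the $n$ rays are distinct and the pairing matrix $(\langle p_i,e_j\rangle)$ is $-I$.
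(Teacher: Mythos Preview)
Your argument is correct and follows the paper's overall strategy (reduce to pairwise commuting homogeneous LNDs of the Cox ring, then invoke Lemma~\ref{com}), but you diverge from the paper in how you handle the openness direction. The paper stays entirely in the Cox ring: for ``completeness $\Rightarrow$ open orbit'' it observes that the remaining $m-n$ Cox coordinates have $\Cl(X)$-weights forming a basis of the character lattice of $H_X$, so the stabilizer of $(1,\ldots,1)$ in $\GG_a^n\times H_X$ is trivial and the orbit on $\KK^m$ is open; for the converse it notes that a shared distinguished ray means the $\GG_a^n$-action moves at most $n-1$ Cox coordinates. You instead pass to the affine chart $U_\sigma\subset X$ for $\sigma=\cone(\rho_1,\ldots,\rho_n)$ and read off the derivations there. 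Your route is a bit more geometric and makes the open orbit visible as all of $\KK^n$; the paper's route avoids having to argue that $\sigma$ lies in $\Sigma_X$ at all.

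That last point is the one place where your write-up is too quick: the phrase ``condition~(2) being automatic here'' does not by itself explain why $\sigma\in\Sigma_X$. What you need is an induction using condition~(2) from the definition of a Demazure root: each ray $\rho_j$ lies in $\Sigma_X$, and if $\sigma_0=\cone(\rho_{i_1},\ldots,\rho_{i_k})\in\Sigma_X$ with $i_{k+1}\notin\{i_1,\ldots,i_k\}$, then $\langle v,e_{i_{k+1}}\rangle=0$ for all $v\in\sigma_0$ (by completeness of the collection), so condition~(2) for the root $e_{i_{k+1}}$ forces $\cone(\sigma_0,\rho_{i_{k+1}})\in\Sigma_X$. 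Once $\sigma\in\Sigma_X$ is established, your observation that in the coordinates $x_j=\chi^{-e_j}$ on $U_\sigma\cong\KK^n$ one has $\partial_{e_i}=\partial/\partial x_i$ exactly (not merely up to a monomial) finishes the job cleanly.
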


\begin{proof}
As we have seen, a normalized additive action on $X$ determines $n$ pairwise commuting homogeneous
locally nilpotent derivations of the Cox ring $\KK[x_1,\ldots,x_m]$. They have the form
$\partial_e$ for some Demazure roots $e$.

\begin{lemma}
Homogeneous locally nilpotent derivations $\partial_1,\ldots,\partial_n$ of the Cox ring
$\KK[x_1,\ldots, x_m]$ corresponding to Demazure roots $e_1,\ldots,e_n$ define a normalized
additive action on $X$ if and only if $e_1,\ldots,e_n$ form a complete collection.
\end{lemma}

\begin{proof}
Assume first that the derivations $\partial_1,\ldots,\partial_n$ give rise to the additive actions
$\GG_a^n\times X\to X$. If some $e_i$ and $e_j$ with $i\ne j$ correspond to the same ray of the fan $\Sigma_X$, then the $\GG_a^n$-action changes at most $n-1$ coordinates of the ring $\KK[x_1,\ldots, x_m]$, and any $\GG_a^n$-orbit on $X$ can not be $n$-dimensional. Then Lemma~\ref{com} implies that $\langle p_i,e_j\rangle~=~0$ for $i\ne j$. By definition,
we have $\langle p_i,e_i\rangle~=-1$, and thus $e_1,\ldots,e_n$ form a complete collection.

Conversely, if $e_1,\ldots,e_n$ is a complete collection, then the corresponding homogeneous locally nilpotent derivations commute. They define a $\GG_a^n$-action on $\KK[x_1,\ldots, x_m]$,
and hence on $\KK^m$. This action descends to $X$. We have to show that the $\GG_a^n$-action on $X$ has an open orbit. For this purpose it suffices to check that the group $\GG_a^n\times H_X$ acts on
$\KK^m$ with an open orbit.

By construction, the group $\GG_a^n$ changes exactly $n$ of the coordinates $x_1,\ldots,x_m$, while the weights of the remaining $m-n$ coordinates with respect to the $\Cl(X)$-grading form a basis of the lattice of characters of the torus $H_X$. This shows that the stabilizer of the point $(1,\ldots,1)\in\KK^m$ in the group
$\GG_a^n\times H_X$ is trivial. Since $\dim(\GG_a^n\times H_X)=n+m-n=m$, we conclude that the $(\GG_a^n\times H_X)$-orbit of the point $(1,\ldots,1)$ is open in $\KK^m$.
\end{proof}

This completes the proof of Theorem~\ref{cc}.
\end{proof}

\begin{corollary}
A toric variety $X$ admits a normalized additive action if and only if there is a complete collection of Demazure roots of the fan $\Sigma_X$.
\end{corollary}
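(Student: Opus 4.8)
The plan is to deduce this statement directly from Theorem~\ref{cc}, which we have just proved. That theorem asserts a bijection between the set of additive actions on $X$ normalized by the acting torus $T$ and the set of complete collections of Demazure roots of the fan $\Sigma_X$. A bijection between two sets has nonempty source precisely when it has nonempty target, so the equivalence claimed in the corollary is immediate once we unwind what it means for $X$ to admit a normalized additive action.

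Concretely, I would spell out both implications. Suppose first that $X$ admits an additive action $\GG_a^n\times X\to X$ normalized by $T$. This action is an element of the left-hand side of the bijection of Theorem~\ref{cc}, hence corresponds to some complete collection $e_1,\ldots,e_n$ of Demazure roots of $\Sigma_X$; in particular, a complete collection exists. Conversely, suppose $e_1,\ldots,e_n$ is a complete collection of Demazure roots of $\Sigma_X$. By Definition~\ref{deff} we have $\langle p_i,e_j\rangle=0$ for $i\ne j$, so Lemma~\ref{com} shows the associated homogeneous locally nilpotent derivations $\partial_{e_1},\ldots,\partial_{e_n}$ of the Cox ring $\KK[x_1,\ldots,x_m]$ pairwise commute, and the lemma established in the course of proving Theorem~\ref{cc} shows they define a normalized additive action on $X$. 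Thus $X$ admits such an action.

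Since the corollary is a formal consequence of the theorem, there is no genuine obstacle: the only subtlety worth making explicit is that the correspondence of Theorem~\ref{cc} is a bona fide bijection of sets, so that one side is inhabited if and only if the other is. One could compress the whole argument into a single sentence, but writing out both directions makes transparent exactly which ingredients are being invoked --- the assignment $e\mapsto\partial_e$, the commutation criterion of Lemma~\ref{com}, and the open-orbit verification contained in the proof of Theorem~\ref{cc}.
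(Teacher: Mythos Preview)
Your proposal is correct and matches the paper's approach: the corollary is stated immediately after Theorem~\ref{cc} with no separate proof, since it is an immediate logical consequence of the bijection established there. Your explicit unpacking of both directions is more verbose than necessary but entirely accurate.
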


The following theorem shows that a normalized additive action on a toric variety is essentially unique.

\begin{theorem} \label{unique}
Any two normalized additive actions on a toric variety are isomorphic.
\end{theorem}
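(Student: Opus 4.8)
The plan is to show that any normalized additive action on $X$ is isomorphic, as an action, to a fixed model built from one complete collection of Demazure roots. By Theorem~\ref{cc}, a normalized additive action corresponds to a complete collection $e_1,\ldots,e_n$ of Demazure roots of $\Sigma_X$, and this collection forces the primitive vectors $p_1,\ldots,p_n$ on the distinguished rays $\rho_{e_1},\ldots,\rho_{e_n}$ to be a basis of $N$, with $-e_1,\ldots,-e_n$ the dual basis of $M$. The first step is to observe that the existence of such a basis of $N$ made of rays of $\Sigma_X$ pins down the ``coordinate frame'' of the torus completely: an automorphism of $N$ sending one complete collection to another exists and is unique, since both $(p_i)$ and the target $(p_i')$ are bases. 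The real content is that this lattice automorphism extends to a toric automorphism of $X$ intertwining the two additive actions.

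The key steps, in order, are as follows. First, I would reduce to comparing two normalized additive actions with complete collections $e_1,\ldots,e_n$ and $e_1',\ldots,e_n'$, possibly after relabeling so that $\rho_{e_i}$ and $\rho_{e_i'}$ correspond under the candidate isomorphism. Second, I would use the Cox-ring picture of Section~\ref{sec2}: each action lifts to a $\GG_a^n$-action on $\overline{X}=\KK^m$ commuting with $H_X$, and by the explicit formula~$(*)$ the $i$-th one-parameter subgroup acts by $x_{k(i)}\mapsto x_{k(i)}+s\cdot(\text{monomial in the other }x_j)$, where $k(i)$ is the index of the ray $\rho_{e_i}$. So on $\KK^m$ the group $\GG_a^n\times H_X$ acts with the open orbit of $(1,\ldots,1)$, and the $m-n$ ``non-moved'' coordinates give a basis of the character lattice of $H_X$. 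Third, I would construct a monomial automorphism of $\KK^m$ — i.e.\ an element of the normalizer of $(\KK^\times)^m$, given by a permutation of coordinates composed with a diagonal rescaling and possibly a coordinate change induced by the lattice isomorphism $N\to N$ — that carries the first lifted $\GG_a^n\times H_X$-action to the second; this is possible precisely because a complete collection trivializes the combinatorial data (the weights of the moved coordinates are determined, and the weights of the remaining coordinates form a lattice basis, so any two such configurations are related by a lattice automorphism fixing the $\Cl(X)$-grading). Fourth, I would check that this monomial automorphism of $\KK^m$ preserves $\widehat{X}\subseteq\overline{X}$ (which depends only on the fan, hence is invariant under the induced lattice automorphism) and commutes appropriately with $H_X$, so it descends to an automorphism of $X=\widehat{X}/\!\!/H_X$ that conjugates one additive action into the other. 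Finally, I would package this as an isomorphism of $\GG_a^n$-varieties, i.e.\ a pair $(\varphi\colon X\to X,\ \psi\colon\GG_a^n\to\GG_a^n)$ with $\varphi(g\cdot x)=\psi(g)\cdot\varphi(x)$.

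I expect the main obstacle to be step three: verifying that the two lifted actions on $\KK^m$ really do differ only by a monomial automorphism. The subtlety is bookkeeping with the $\Cl(X)$-grading — one must confirm that after matching the $n$ moved coordinates (whose weights are forced to be $0$ in the $\Cl(X)$-grading, since the derivations have $\Cl(X)$-degree zero and hence so do the moved variables up to the relations coming from $-e_i$ being a lattice basis) the remaining $m-n$ coordinates can be matched by an element of $H_X$ together with a permutation, using only that their weights form a basis of the character lattice of $H_X$ in both cases. Equivalently, one shows directly on $X$ that the toric automorphism induced by the lattice isomorphism $p_i\mapsto p_i'$ (which permutes the rays of $\Sigma_X$ and is a genuine automorphism of $X$ because it is an automorphism of the fan) conjugates $H_{e_i}$ to $H_{e_i'}$, because the Demazure root attached to a one-parameter unipotent subgroup is transported by the lattice automorphism, and a complete collection is determined up to such an automorphism. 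Once that conjugation is established the open-orbit property is automatic, and uniqueness of the normalized additive action up to isomorphism follows.
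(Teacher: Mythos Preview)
Your overall architecture is correct: the candidate isomorphism is the lattice automorphism $\gamma\colon N\to N$ sending the basis $p_1,\ldots,p_n$ (distinguished rays of the first complete collection) to $p_1',\ldots,p_n'$ (those of the second), and the task is to promote $\gamma$ to a toric automorphism of $X$ conjugating one $\GG_a^n$-action to the other. But you have not proved the one thing that actually carries content, and you assert it twice as if it were automatic. In step four you write that $\widehat{X}$ ``depends only on the fan, hence is invariant under the induced lattice automorphism''; later you say $\gamma$ ``permutes the rays of $\Sigma_X$ and is a genuine automorphism of $X$ because it is an automorphism of the fan''. Neither of these is given: $\gamma$ is defined only by where it sends $p_1,\ldots,p_n$, and there is no a~priori reason it should permute the remaining rays of $\Sigma_X$, let alone send cones to cones. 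The Cox-ring reformulation does not bypass this issue, since a monomial automorphism of $\KK^m$ descends to $X$ precisely when it preserves the $H_X$-grading and the irrelevant locus $\overline{X}\setminus\widehat{X}$, and both are encoded by the fan; so ``check that this monomial automorphism preserves $\widehat{X}$'' is exactly the fan-automorphism check in disguise.

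The paper's proof fills this gap with a concrete combinatorial argument. After relabelling so that $p_i=p_i'$ for $i\le r$ and the remaining $p_j,p_j'$ are pairwise distinct, one uses the root inequalities $\langle p_\rho,e_i\rangle\ge 0$ (for $\rho\ne\rho_i$) on both collections to show that each $p_j'$ with $j>r$ has $p_j$-coordinate $-1$ and $p_s$-coordinate $0$ for $s>r$, $s\ne j$, and that every other ray $p''$ of $\Sigma_X$ lies in the span of $p_1,\ldots,p_r$. From this one deduces that $\gamma$ actually swaps $p_j\leftrightarrow p_j'$ and fixes all $p''$, so it does permute the rays. To see $\gamma$ preserves the cones, the paper factors $\gamma=\gamma_{r+1}\cdots\gamma_n$ where $\gamma_j$ swaps $p_j\leftrightarrow p_j'$ and fixes the hyperplane $\langle\cdot,e_j\rangle=0$ pointwise, and then invokes condition~(2) in the definition of a Demazure root (the condition that any cone in $e_j^\perp$ together with $\rho_j$ spans a cone of $\Sigma_X$, and symmetrically for $e_j'$ and $\rho_j'$) to check that each $\gamma_j\in\Aut(\Sigma_X)$. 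This use of condition~(2) is the crux, and nothing in your outline supplies it. As a minor aside, your parenthetical that the moved coordinates have $\Cl(X)$-weight zero is not correct: the derivation $\partial_{e_i}$ having $\Cl(X)$-degree zero means $\deg x_{k(i)}$ equals the degree of the accompanying monomial, not that $\deg x_{k(i)}=0$.
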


\begin{proof}
Define the group $\Aut(\Sigma)$ of automorphisms of a fan $\Sigma$ as the subgroup of linear transformations $\gamma$ of the lattice $N$ preserving the fan $\Sigma$. Any linear transformation $\gamma$ determines the dual transformation $\gamma^*$ of the lattice $M$. We say that two complete collections of Demazure roots of $\Sigma$ are {\it equivalent}, if one can be sent to another by an automorphism of $\Sigma$.

Let $X$ be a toric variety. Any automorphism of the fan $\Sigma_X$ is induced by an automorphism
of~$X$; see \cite[Theorem~3.3.4]{CLS}. It suffices to prove that every two complete collections
$e_1,\ldots,e_n$ and $e_1',\ldots,e_n'$ of Demazure roots of $\Sigma_X$ are equivalent; the corresponding automorphism of $X$ establishes an isomorphism of the normalized additive actions.

For the primitive vectors defined by our collections, we may assume that $p_1=p_1',\ldots,p_r=p_r'$, while $p_{r+1},\ldots,p_n,p_{r+1}',\ldots,p_n'$ are pairwise different. Then $p_{r+1}',\ldots,p_n'$ are non-positive linear combinations of $p_1,\ldots,p_n$, and
$p_{r+1},\ldots,p_n$ are non-positive linear combinations of $p_1',\ldots,p_n'$. This implies that, up to renumbering, the vector $p_j', j>r,$ has coordinate $-1$ at $p_j$ and zero coordinates at other $p_s, s>r$.

If there is a ray of the fan $\Sigma_X$ whose primitive vector $p''$ is not involved yet, then $p''$ is a non-positive linear combination of both $p_1,\ldots,p_n$ and $p_1',\ldots,p_n'$. This shows that $p''$ is a non-positive linear combination of $p_1,\ldots,p_r$.

Consider the automorphism $\gamma$ of the lattice $N$ which sends the basis $p_1,\ldots,p_n$ to the basis $p_1',\ldots,p_n'$. It is easy to see that $\gamma$ sends $p_1',\ldots,p_n'$ to $p_1,\ldots,p_n$, and preserves all the vectors $p''$. Thus $\gamma$ sends one complete collection of Demazure roots to another.

It remains to check that $\gamma$ is an automorphism of the cone $\Sigma_X$. Consider the vector $p_j$, $j>r$, and the corresponding Demazure root $e_j$. Then all vectors $p_i$, $i\ne j$,
$p'_i$, $i\ne j$, and $p''$ are contained in the hyperplane $\langle\cdot,e_j\rangle=0$. Thus we can decompose the transformation~$\gamma$ in the composition of transformations $\gamma_{r+1},\ldots,\gamma_n$, where $\gamma_j$ sends $p_j$ to $p_j'$ and fixes the hyperplane $\langle\cdot,e_j\rangle=0$ pointwise.

It suffices to check that each $\gamma_j$ is an automorphism of the fan $\Sigma_X$. Equivalently, we have to check that for any cone $\sigma$ in $\Sigma_X$ containing $p_j$ there is a cone in $\Sigma_X$ generated by the same rays, but with $\rho_j$ replaces by $\rho_j'$. If the cone $\sigma$ contains both $p_j$ and $p_j'$, the assertion is clear. If $\sigma$ does not contain $p_j'$, it is generated by $p_j$ and some cone $\sigma_0$ contained in the hyperplane $\langle\cdot,e_j\rangle=0$. By the symmetric arguments, the cone $\sigma_0$ is contained in the hyperplane $\langle\cdot,e'_j\rangle=0$ as well. Condition~(2) from the definition of a Demazure root of a fan $\Sigma$ implies that the cone generated by $p'_j$ and $\sigma_0$ belongs to $\Sigma$ as well. This completes the proof of Theorem~\ref{unique}.
\end{proof}

%%%%%%%%%%%%%%%%%%%%%%%%%%%%%%%%%%%%%%%%%%%%%%%%%%%%%%%%%%%%%%%

\section{Arbitrary additive actions}
\label{sec4}

Let $X$ be a complete toric variety with an acting torus $T$. It is well known that the automorphism group $\Aut(X)$ is a linear algebraic group with $T$
as a maximal torus, see~\cite{De}, \cite{Cox}. In particular, $\Aut(X)$ contains a maximal unipotent subgroup $U$, and all such subgroups are conjugate
in $\Aut(X)$.

Let $G$ be the product of the semisimple part and the unipotent radical of $\Aut(X)$.

\begin{theorem} \label{3con}
Let $X$ be a complete toric variety with an acting torus $T$. The following conditions are equivalent.
\begin{itemize}
\item[(1)]
There exists an additive action on $X$ normalized by the acting torus $T$.
\item[(2)]
There exists an additive action on $X$.
\item[(3)]
A maximal unipotent subgroup $U$ of the automorphism group $\Aut(X)$ acts on $X$ with an open orbit.
\item[(4)]
The subgroup $G$ acts on $X$ with an open orbit.
\end{itemize}
\end{theorem}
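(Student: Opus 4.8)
The plan is to prove the chain of implications $(1)\Rightarrow(2)\Rightarrow(3)\Rightarrow(4)\Rightarrow(1)$, since each single step is the tractable unit. The implication $(1)\Rightarrow(2)$ is trivial, as a normalized additive action is in particular an additive action. For $(2)\Rightarrow(3)$, suppose $\GG_a^n$ acts on $X$ with an open orbit; since $\GG_a^n$ is a connected unipotent subgroup of the linear algebraic group $\Aut(X)$, it is contained in some maximal unipotent subgroup $U$, and then $U\supseteq\GG_a^n$ already has an open orbit (the orbit can only grow). The implication $(3)\Rightarrow(4)$ is immediate: the unipotent radical of $\Aut(X)$ together with a maximal unipotent subgroup of the semisimple part generate a maximal unipotent subgroup $U$ of $\Aut(X)$, so $U\subseteq G$, and thus $G$ also acts with an open orbit.

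The real content is $(4)\Rightarrow(1)$, and this will be the main obstacle. Here I would argue as follows. Since $G$ is normalized by $T$ (because $T$ normalizes both the unipotent radical and, being a maximal torus, acts on $\Aut(X)$ preserving its semisimple part up to conjugacy — more precisely $G$ is the unique connected subgroup of $\Aut(X)$ with $\Aut(X)=T\cdot G$ characterized intrinsically), the group $T\cdot G$ is the full connected automorphism group $\Aut(X)^\circ$ and it acts on $X$ with an open orbit whenever $G$ does. The strategy is to pass to the Cox ring: by the discussion in Section~\ref{sec2} the automorphism group of $X$ lifts to automorphisms of $\overline{X}=\KK^m$ commuting with $H_X$, and the toric structure gives a maximal torus $(\KK^\times)^m\supseteq T\cdot H_X$ inside $\Aut(\KK^m)$ commuting with $H_X$. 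A maximal unipotent subgroup $U$ of $\Aut(X)$ lifts to a unipotent subgroup $\widehat U$ of $\Aut(\KK^m)$ normalized by $(\KK^\times)^m$; such a subgroup of the automorphism group of affine space, being normalized by the diagonal torus, is generated by root subgroups, i.e. by the one-parameter unipotent groups $H_e$ attached to Demazure roots. One then shows that if $U$ has an open orbit on $X$, one can extract from these root subgroups a subfamily $H_{e_1},\ldots,H_{e_n}$ whose degrees $e_1,\ldots,e_n$ form a complete collection; by Theorem~\ref{cc} these give the desired normalized additive action.

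The delicate point in $(4)\Rightarrow(1)$ is the combinatorial extraction step: from the fact that the whole collection of root subgroups (equivalently, all Demazure roots of $\Sigma_X$) generates a group acting with an open orbit, one must produce $n$ of them with pairwise distinct distinguished rays $\rho_1,\ldots,\rho_n$ satisfying $\langle p_i,e_j\rangle=-\delta_{ij}$. The key structural input is that the Cox ring is the polynomial ring $\KK[x_1,\ldots,x_m]$ and that the $\Cl(X)$-degree-zero homogeneous LNDs are exactly the $\partial_e$; the open orbit condition for $U\cdot H_X$ on $\KK^m$ forces, by a dimension count as in the proof of Theorem~\ref{cc}, that the coordinates moved by the unipotent part together with the remaining coordinates span enough of the weight lattice — and a linear-algebra argument then shows the moved coordinates can be taken to be $n$ in number, realized by roots $e_1,\ldots,e_n$ with the required duality relations. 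I expect that making this extraction precise — in particular ruling out the case where fewer than $n$ coordinates are ever moved, and arranging the distinguished rays to be distinct — is where the argument requires care, and it is natural to invoke the local structure of $T\cdot G$-orbits (Proposition~\ref{phe} and Lemma~\ref{lpe}) to control which cones of $\Sigma_X$ can appear.

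\smallskip

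After establishing the four equivalences, I would close with the remark that combining Theorem~\ref{3con} with Theorem~\ref{unique} shows that a complete toric variety admits an additive action if and only if it admits one normalized by $T$, and that this action is then unique up to isomorphism — which is precisely the assertion of the abstract.
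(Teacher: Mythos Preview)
Your implications $(1)\Rightarrow(2)\Rightarrow(3)\Rightarrow(4)$ are correct and match the paper. The gap is in the closing step. You propose to prove $(4)\Rightarrow(1)$ directly by lifting to the Cox ring and ``extracting'' from the root subgroups a complete collection $e_1,\ldots,e_n$; but you never say how this extraction is carried out, and in fact the paper's own Remark immediately after the theorem flags exactly this direct passage --- from linearly independent distinguished vectors $p_1,\ldots,p_n$ to a complete collection of Demazure roots --- as an open problem. So the route you have chosen is one the authors themselves do not know how to complete. Your sketch also slides between hypotheses: under $(4)$ you write ``if $U$ has an open orbit on $X$'', but that is condition $(3)$, not $(4)$, and you have not yet derived it.

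The paper avoids this by \emph{not} closing the cycle through $(4)\Rightarrow(1)$. Instead it proves $(3)\Rightarrow(1)$ and $(4)\Rightarrow(3)$ separately. For $(3)\Rightarrow(1)$ the argument does not go through the Cox ring at all: the open $U$-orbit is a $T$-invariant affine chart, hence a copy of $\KK^n$ with the standard torus action; each root subgroup acts there as $x_i\mapsto x_i+s\cdot(\text{monomial})$. If the ``constant shift'' subgroups $U_i\colon x_i\mapsto x_i+c$ are not all in $U$, take $r$ maximal with $U_1,\ldots,U_r\subseteq U$; transitivity on $\KK^n$ forces some root subgroup with $i>r$ whose monomial involves only $x_1,\ldots,x_r$, and iterated Lie brackets with $\partial_1,\ldots,\partial_r$ strip that monomial down to a constant, producing $U_i\subseteq U$ --- a contradiction. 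This commutator trick is the missing idea in your proposal. For $(4)\Rightarrow(3)$ the paper uses Proposition~\ref{phe}: the open $G$-orbit forces the distinguished vectors $p_e$ to span $N_{\QQ}$, and then one trades any semisimple root subgroup $U_{e_i}\not\subseteq U$ for its opposite $U_{-e_i}\subseteq U$, preserving linear independence of the $p_i$'s and yielding an open $U$-orbit.
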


\begin{proof}
Clearly, condition (1) implies (2). Since any unipotent subgroup of $\Aut(X)$ is contained in a maximal unipotent subgroup, implication (2) $\Rightarrow$ (3) holds.

Now we prove implication (3) $\Rightarrow$ (1). Let a maximal unipotent subgroup $U$ act on $X$ with an open orbit. We may assume that $U$ is normalized by $T$. Then $U$ is generated by one-parameter subgroups $U_e$ normalized by $T$ and corresponding to some Demazure roots $e$.

Let $\mathcal{O}$ be an open $U$-orbit in $X$. Being an orbit of a unipotent group, $\mathcal{O}$ is isomorphic to the affine space $\KK^n$. Since $U$ is normalized by $T$, the subset $\mathcal{O}$
is a $T$-invariant open affine chart on $X$. It implies that there are coordinates $x_1,\ldots,x_n$ on $\mathcal{O}$ such that the $T$-action on $\mathcal{O}$ is the standard action of $T$ on $\KK^n$.

The root subgroups $U_e$ acts on $\KK^n$ as
$$
x_i \to x_i + sx_1^{c_1}\ldots x_n^{c_n}, \ s\in\KK, \quad x_j\to x_j \quad \forall j\ne i,
$$
for some $i$, where the monomial $x_1^{c_1}\ldots x_n^{c_n}$ does not depend on $x_i$. It corresponds to a locally nilpotent derivation $\partial_e$ given by
$$
\partial_e(x_i)=x_1^{c_1}\ldots\widehat{x_i}\ldots x_n^{c_n}, \quad \partial_e(x_j)=0 \ \forall j\ne i,
$$
where $e=(c_1,\ldots,c_n)$ is a Demazure root with $c_i=-1$.

Let $U_i$ be a subgroup of the automorphism group of $\KK^n$ sending $x_i$ to $x_i+c$, $c\in\KK$, and fixing all $x_j$, $j\ne i$. We claim that the group $U$ contains all the subgroups $U_i$, $i=1,\ldots,n$.

Let us assume the converse. By renumbering we may suppose that for some $r<n$ precisely the subgroups $U_j$ with $j\le r$ are contained in $U$. By $r=0$ we mean that there are no subgroup $U_i$ in $U$ at all.

Since the action of $U$ on $\KK^n$ is transitive, for every $i>r$ there is at least one subgroup $U_e$ of the form $x_i \to x_i + sx_1^{c_1}\ldots x_n^{c_n}$ in $U$. If for all such subgroups the monomial $x_1^{c_1}\ldots x_n^{c_n}$ depends at least on one variable $x_{r+1},\ldots,x_n$, the subspace $x_{r+1}=\ldots=x_n=0$ in $\KK^n$ is invariant under $U$, a contradiction. Thus we have a subgroup $U_e$ of the form $x_i\to x_i+sx_1^{c_1}\ldots x_r^{c_r}$ for some $i>r$.

Consider the subspace spanned by the derivations $\partial_e$ over all subgroups $U_e$ contained in $U$ in the Lie algebra of all derivations of the polynomial algebra $\KK[x_1,\ldots,x_n]$. This subspace may be identified with the tangent algebra of the group $U$. In particular, it is closed under the Lie bracket of derivations.

Let $\partial_j$ with $\partial_j(x_i)=\delta_{ij}$ be the derivation corresponding to $U_j$ and $\partial_e$ correspond to the subgroup $U_e$ of the form $x_i\to x_i+sx_1^{c_1}\ldots x_r^{c_r}$ for some $i>r$.

It is easy to check that the commutant $[\partial_1, \partial_e]$ is a locally nilpotent derivation sending $x_i$ to $c_1x_1^{c_1-1}\ldots x_r^{c_r}$ and annihilating all other variables. Taking more commutants of $\partial_e$ with $\partial_1,\ldots,\partial_r$, we obtain a locally nilpotent derivation, which sends $x_i$ to a nonzero constant and annihilates all other variables. It shows that the derivation $\partial_i$ is contained in the Lie algebra of $U$. Thus the subgroup $U_i$ is contained in $U$, a contradiction with a choice of $r$.

We conclude that the subgroups $U_i$, $i=1,\ldots,n$, are contained in $\Aut(X)$ and generate a commutative unipotent subgroup normalized by $T$, which acts on $\mathcal{O}$ transitively. Thus implication (3) $\Rightarrow$ (1) is proved.

Since any maximal unipotent subgroup is contained in $G$, we have (3) $\Rightarrow$ (4).

Let us prove implication (4) $\Rightarrow$ (3). Since $X$ is irreducible, the open $G$-orbit on $X$ intersects the open $T$-orbit $X_0$. Take a point $x_0$ in $X_0$ whose $G$-orbit is open in $X$.

Note that $G$ coincides with the subgroup of $\Aut(X)$ generated by all root subgroups $U_e$.
By Proposition~\ref{phe}, the intersection of the orbit $U_e\cdot x_0$ with $X_0$  coincides with the orbit $R_e\cdot x_0$, where $R_e$ is a one-parameter subtorus in $T$ represented by the distinguished vector $p_e$ of the Demazure root $e$. Thus the orbit $G\cdot x_0$ is open in $X$ if and only if the subtori $R_e$ are not contained in a proper subtorus of $T$ or, equivalently, the vectors $p_e$'s are not contained in a proper subspace of $N_{\mathbb{Q}}$.

Hence we have a set $p_1,\ldots,p_n$ of linearly independent primitive vectors of the fan $\Sigma_X$ corresponding to some Demazure roots $e_1,\ldots,e_n$. Let $U$ be a maximal unipotent subgroup of $\Aut(X)$ normalized by $T$. If the root subgroup $U_{e_i}$ is contained in the unipotent radical of $\Aut(X)$, then $U_{e_i}$ is contained in $U$ as well. If $U_{e_i}$ is in the semisimple part of $\Aut(X)$ and is not in $U$, then the opposite root subgroup $U_{-e_i}$ is in $U$. The hyperplane $\langle\cdot,e_i\rangle=0$ contains all the rays of the fan $\Sigma_X$ except for $\rho_{e_i}$ and $\rho_{-e_i}$. Thus replacing the vector $p_i$ by $p_{-e_i}$ in the set $p_1,\ldots,p_n$, we again obtain a linearly independent set.

Proceeding this way, we obtain a linearly independent set $p_1,\ldots,p_n$, where the vectors $p_i$ correspond to some root subgroups in $U$. Thus $U$ acts on $X$ with an open orbit, and the proof of implication (4) $\Rightarrow$ (3) is completed.
\end{proof}

\begin{corollary}\label{cba}
A toric variety $X$ admits an additive action if and only if there is a complete collection of Demazure roots of the fan $\Sigma_X$.
\end{corollary}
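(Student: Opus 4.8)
The plan is to obtain Corollary~\ref{cba} as a formal consequence of the two main results already in place, namely Theorem~\ref{cc} and Theorem~\ref{3con}; no new computation is required.

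First I would dispatch the easy implication. Suppose the fan $\Sigma_X$ admits a complete collection $e_1,\dots,e_n$ of Demazure roots. By Theorem~\ref{cc}, the complete collections of Demazure roots of $\Sigma_X$ are in bijection with the additive actions on $X$ normalized by $T$; in particular, a complete collection produces a normalized additive action $\GG_a^n\times X\to X$, which is in particular an additive action on $X$. This direction uses nothing about completeness.

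For the converse, assume that the complete toric variety $X$ admits an additive action. By the equivalence of conditions~(2) and~(1) in Theorem~\ref{3con}, $X$ then admits an additive action normalized by the acting torus $T$. Feeding this normalized additive action into the bijection of Theorem~\ref{cc}, one obtains a complete collection of Demazure roots of $\Sigma_X$, as required.

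The only point worth flagging is the standing hypothesis: Theorem~\ref{3con} is proved for complete toric varieties, so the ``only if'' part of the corollary is to be read in that setting (as throughout Section~\ref{sec4}), whereas the ``if'' part holds for an arbitrary toric variety. Beyond matching up these hypotheses there is no obstacle — the statement is a direct corollary of Theorems~\ref{cc} and~\ref{3con}.
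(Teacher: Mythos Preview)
Your argument is correct and matches the paper's intended reasoning: the corollary is stated without proof because it follows immediately by combining Theorem~\ref{cc} (the bijection between normalized additive actions and complete collections) with the equivalence $(1)\Leftrightarrow(2)$ of Theorem~\ref{3con}. Your remark that the ``only if'' direction relies on the completeness hypothesis of Section~\ref{sec4}, while the ``if'' direction holds for any toric variety, is accurate and worth keeping in mind.
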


\begin{remark}
As we have seen in the proof, condition~(4) of Theorem~\ref{3con} is equivalent to existence of linearly independent primitive vectors $p_1,\ldots,p_n$ in the fan $\Sigma_X$ corresponding to some Demazure roots $e_1,\ldots,e_n$. At the same time we know that condition~(1) of Theorem~\ref{3con} means that there is a complete collection of Demazure roots of the fan $\Sigma_X$. It seems to be an interesting problem to prove explicitly that linearly independent vectors $p_1,\ldots,p_n$ correspond to a complete collection of Demazure roots.
\end{remark}

Let $\fu$ be the Lie algebra of a maximal unipotent subgroup $U$ of $\Aut(X)$ normalized by $T$. Consider the Grassmannian $\Gr(n,\fu)$ of $n$-dimensional subspaces of the space $\fu$.
Commutative Lie subalgebras of dimension $n$ form a closed subvariety $Z$ in $\Gr(n,\fu)$. The adjoint action of the torus $T$ on $\fu$ induces an action of $T$ on $\Gr(n,\fu)$, which leaves invariant the subvariety $Z$. Points in $Z$ fixed by $T$ are precisely $T$-normalized commutative subalgebras in~$\fu$.

Take any commutative $n$-dimensional subgroup $H$ in $U$ which acts on $X$ with an open orbit, and consider the closure of the $T$-orbit of the point in $Z$ corresponding to the Lie algebra of~$H$. This closure contains finitely many $T$-fixed points. If $H$ is not normalized by $T$, then there are at least two fixed points in the closure. In general, not any subgroup corresponding to such points acts on $X$ with an open orbit, see Examples~\ref{p2} and~\ref{hir} below. We expect that
at least one such subgroup does act with an open orbit on $X$.

%%%%%%%%%%%%%%%%%%%%%%%%%%%%%%%%%%%%%%%%%%%%%%%%%%%%%%%%%%%%%%%%

\section{Projective toric varieties and polytopes}
\label{sec5}

It is well known that there is a correspondence between convex lattice polytopes and projective toric varieties. The aim of this section is to characterize polytopes corresponding to toric varieties that admit an additive action.

We begin with preliminary results; see \cite[Chapter~2]{CLS} and \cite[Section~1.5]{Fu} for more details. Let $M$ be a lattice of rank $n$ and $P$ be a full dimensional convex polytope in the space $M_{\QQ}$. We say that $P$ is a {\it lattice polytope} if all its vertices lie in $M$.

A subsemigroup $S\subseteq M$ is called {\it saturated} if $S$ coincides with the intersection of
the group $\ZZ S$ and the cone $\QQ_{\ge 0}S$ it generates. A lattice polytope $P$ is {\it very ample} if for every vertex $v\in P$, the semigroup $S_{P,v}:=\ZZ_{\ge 0}(P\cap M - v)$ is saturated. It is known that for every lattice polytope $P$ and every $k\ge n-1$ the polytope $kP$ is very ample, see \cite[Corollary~2.2.19]{CLS}.

Let us consider $M$ as a lattice of characters of a torus $T$. Let $P\subseteq M_{\QQ}$ be a very ample polytope and $P\cap M=\{m_1,\ldots,m_s\}$. We consider a map
$$
T\to\PP^{s-1}, \quad t\mapsto (\chi^{m_1}(t):\ldots:\chi^{m_s}(t))
$$
and define a variety $X_P$ as the closure of the image of this map in $\PP^{s-1}$.
It is known that $X_P$ is a projective toric variety with the acting torus $T$, and any projective toric variety appears this way.

\begin{definition} \label{iir}
We say that a very ample polytope $P$ is {\it inscribed in a rectangle} if there is a vertex $v_0\in P$ such that
\begin{enumerate}
\item[(1)]
the primitive vectors on the edges of $P$ containing $v_0$ form a basis $e_1,\ldots,e_n$
of the lattice $M$;
\item[(2)]
for every inequality $\langle p,x\rangle\le a$ on $P$ that corresponds to a facet of $P$ not passing through $v_0$ we have $\langle p,e_i\rangle\ge 0$ for all $i=1,\ldots,n$.
\end{enumerate}
\end{definition}

\begin{figure}[h]\label{rectangle-polytope}
\centerline{\includegraphics[scale = 0.12]{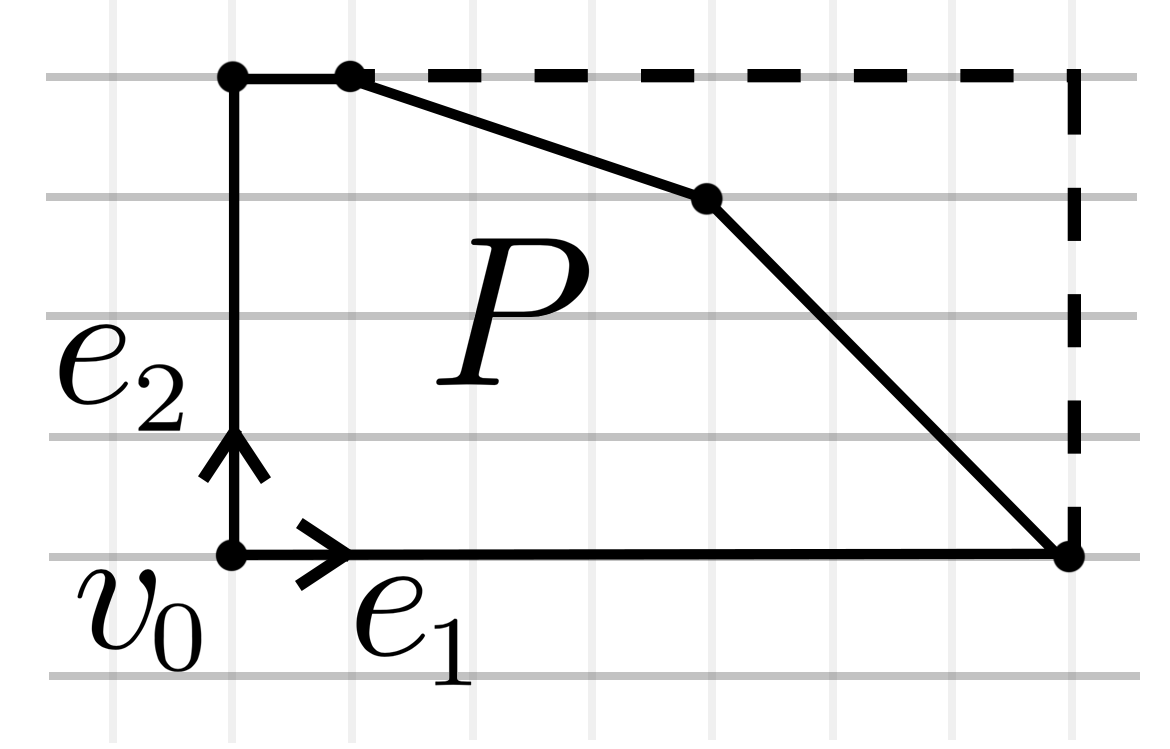}}
\centerline{\figurename{\ 1.} A polytope $P$ inscribed in a rectangle}
\end{figure}

\medskip

The following result was communicated to us by Evgeny Feigin as a conjecture.

\begin{theorem}
Let $P$ be a very ample polytope and $X_P$ be the corresponding projective toric variety. Then $X_P$ admits an additive action if and only if the polytope $P$ is inscribed in a rectangle.
\end{theorem}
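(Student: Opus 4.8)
The plan is to translate the condition ``$X_P$ admits an additive action'' into the combinatorics of the normal fan $\Sigma_{X_P}$ via Corollary~\ref{cba}, and then to match the resulting statement about complete collections of Demazure roots with the geometric condition that $P$ be inscribed in a rectangle. Recall that the rays of $\Sigma_{X_P}$ are the inner facet normals of $P$: each facet $F$ of $P$, say cut out by $\langle p_F, x\rangle \le a_F$, contributes the ray $\rho_F = \QQ_{\ge 0} p_F$, and the maximal cones of $\Sigma_{X_P}$ are dual to the vertices of $P$ (the cone at a vertex $v_0$ is generated by the $p_F$ for all facets $F$ through $v_0$). So I would fix a vertex $v_0$ and, after translating $v_0$ to the origin, describe the edges of $P$ at $v_0$ and the facets through $v_0$ in coordinates; the primitive edge vectors $e_1,\dots,e_n$ at $v_0$ are dual (up to sign) to the facet normals $p_1,\dots,p_n$ of the facets through $v_0$, in the sense that $\langle p_i, e_j\rangle = 0$ for $i\ne j$ and $\langle p_i, e_i\rangle < 0$.

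For the ``if'' direction, suppose $P$ is inscribed in a rectangle with distinguished vertex $v_0$, edge vectors $e_1,\dots,e_n$ forming a basis of $M$, and facet normals $p_1,\dots,p_n$ for the facets through $v_0$. Condition~(1) of Definition~\ref{iir} forces $p_1,\dots,p_n$ to be the dual basis up to sign: $\langle p_i, e_j\rangle = -\delta_{ij}$ after normalizing primitive vectors appropriately (the sign is $-$ because the $p_i$ are inner normals and the $e_j$ point into $P$). I claim each $e_j$ is a Demazure root of $\Sigma_{X_P}$ with distinguished ray $\rho_j$. Condition~(1) of the definition of a Demazure root, $\langle p_j, e_j\rangle = -1$, holds by the above; $\langle p_i, e_j\rangle \ge 0$ for $i \ne j$ among the facets through $v_0$ holds since it equals $0$; and for a facet $F$ \emph{not} through $v_0$, with inner normal $p_F$ and inequality $\langle p_F, x\rangle \le a_F$, condition~(2) of Definition~\ref{iir} gives exactly $\langle p_F, e_j\rangle \ge 0$. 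Since $X_P$ is complete, condition~(2) in the definition of a Demazure root of a fan is automatic. Hence $e_1,\dots,e_n$ is a complete collection of Demazure roots, and Corollary~\ref{cba} yields an additive action on $X_P$.

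For the ``only if'' direction, suppose $X_P$ admits an additive action; by Corollary~\ref{cba} there is a complete collection $e_1,\dots,e_n$ of Demazure roots, so $-e_1,\dots,-e_n$ is a basis of $M$ and $p_1,\dots,p_n$ is the dual basis of $N$ with $\langle p_i, e_j\rangle = -\delta_{ij}$. The rays $\rho_1,\dots,\rho_n$ generate a cone $\sigma$ of $\Sigma_{X_P}$ (using condition~(2) for Demazure roots and completeness to see that the cone generated by all of them lies in the fan), and this maximal cone corresponds to a vertex $v_0$ of $P$. The edges of $P$ at $v_0$ lie along the rays dual to the facets $p_i$: the primitive edge vector in the direction complementary to the facets $p_i$ ($i\ne j$) is a positive multiple of $e_j$ (the sign and primitivity following from $\langle p_i, e_j\rangle = -\delta_{ij}$ and the fact that $-e_1,\dots,-e_n$, hence $e_1,\dots,e_n$, is a lattice basis). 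This gives condition~(1) of Definition~\ref{iir}. For condition~(2), take any facet $F$ of $P$ not through $v_0$, with inner normal $p_F$ a primitive ray generator of $\Sigma_{X_P}$ distinct from $\rho_1,\dots,\rho_n$; since $e_j\in\mathfrak R_{\rho_j}$, condition~(1) for a Demazure root gives $\langle p_F, e_j\rangle \ge 0$, which is exactly what is required.

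The main obstacle I expect is the careful bookkeeping of the duality between edges at $v_0$ and facets through $v_0$, together with the primitivity and sign claims: one must check that the primitive edge vectors at $v_0$ are precisely (not merely proportional to) the $e_j$, and that ``facets not through $v_0$'' correspond exactly to ``ray generators not among $\rho_1,\dots,\rho_n$'' — in particular that $P$ having a vertex whose tangent cone is generated by $p_1,\dots,p_n$ is equivalent to $\sigma = \cone(p_1,\dots,p_n)$ being a (necessarily maximal) cone of the normal fan. Once this dictionary between the polytope at $v_0$ and the pair (complete collection, distinguished cone) is set up cleanly, both implications reduce to matching the two sign conditions: ``$\langle p_F, e_j\rangle \ge 0$ for facets $F$ off $v_0$'' on the polytope side versus ``$\langle p_\rho, e_j\rangle \ge 0$ for $\rho\ne\rho_j$'' in the definition of a Demazure root.
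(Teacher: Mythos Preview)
Your strategy is exactly the paper's: invoke Corollary~\ref{cba} to reduce the existence of an additive action to the existence of a complete collection of Demazure roots of the normal fan $\Sigma_P$, and then match the two conditions of Definition~\ref{iir} against the defining conditions of a complete collection. The paper's own proof is three sentences and simply declares this matching ``straightforward.''

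There is, however, a consistent sign error in your bookkeeping that makes several steps fail as written. With the standard normal-fan convention (the maximal cone at a vertex $v_0$ is $\sigma_{v_0}=C_{v_0}^{\vee}$, where $C_{v_0}=\cone(P\cap M - v_0)$), the primitive ray generators $p_i\in N$ of $\sigma_{v_0}$ pair \emph{positively} with the edge vectors $e_j$ at $v_0$: one gets $\langle p_i,e_j\rangle=+\delta_{ij}$, not $-\delta_{ij}$. (Check this on the standard simplex: both the edge vectors at the origin and the rays of the normal-fan cone there are the standard basis vectors.) Consequently the complete collection of Demazure roots is $-e_1,\dots,-e_n$, not $e_1,\dots,e_n$; this is precisely what the paper asserts. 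Likewise, in Definition~\ref{iir}(2) the vector $p$ in the inequality $\langle p,x\rangle\le a$ is the \emph{outward} facet normal, i.e.\ the negative of the primitive ray generator $p_\rho$ of $\Sigma_P$; the condition $\langle p,e_i\rangle\ge 0$ there translates to $\langle p_\rho,-e_i\rangle\ge 0$, exactly the Demazure-root inequality for $-e_i$. Once these signs are corrected, your verification goes through unchanged --- your own caveat about ``primitivity and sign claims'' was well placed.
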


\begin{proof}
By Corollary~\ref{cba}, a toric variety $X$ admits an additive action if and only if the fan $\Sigma_X$ admits a complete collection of Demazure roots. By \cite[Proposition~3.1.6]{CLS}, the fan $\Sigma_{X_P}$ of the toric variety $X_P$ corresponding to the polytope $P$ coincides with the normal fan $\Sigma_P$ of the polytope~$P$. It is straightforward to check that two conditions of Definition~\ref{iir} are precisely the conditions on $-e_1,\ldots,-e_n$ to be a complete collection of Demazure roots of the fan $\Sigma_P$.
\end{proof}

\begin{example}
The segment $P=[0,d]$ in $\QQ^1$ with $d\in\ZZ_{\ge 1}$ is a polytope inscribed in a rectangle, and the variety
$$
X_P=\overline{\{(1:a:\ldots:a^d) \, ; \, a\in\KK\}}\subseteq\PP^d
$$
is a rational normal curve of degree $d$.
\end{example}

\begin{example}
The polytope

\begin{figure}[h]\label{hirtzebruch-polytope}
\centerline{\includegraphics[scale = 0.25]{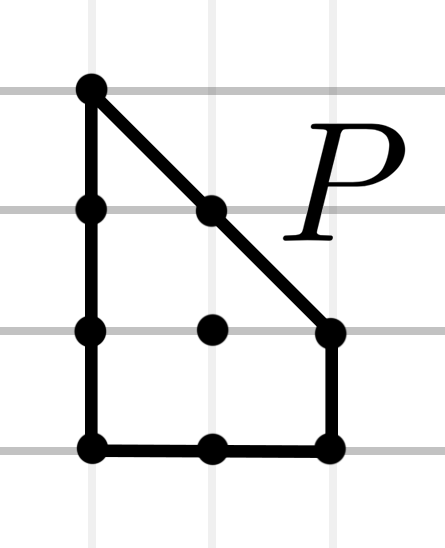}}
\centerline{\figurename{\ 2.} The polytope of a Hirzebruch surface}
\end{figure}

\noindent defines the surface
$$
X_P=\overline{\{(1:a:a^2:b:ab:a^2b:b^2:ab^2:b^3) \, ; \, a,b\in\KK\}}\subseteq\PP^8
$$
isomorphic to the Hirzebruch surface.
\end{example}

%%%%%%%%%%%%%%%%%%%%%%%%%%%%%%%%%%%%%%%%%%%%%%%%%%%%%%%%%%%%%%%%

\section{Examples and open problems}
\label{sec6}

We begin this section with explicit formulas for additive actions on toric varieties in terms of Cox rings.

\begin{example}
The normalized additive action on the projective space $\PP^n$ is given by
$$
(x_1,\ldots,x_n,x_{n+1})\mapsto (x_1+s_1x_{n+1},\ldots,x_n+s_nx_{n+1},x_{n+1}).
$$
The hyperplane $x_{n+1}=0$ consists of $\GG_a^n$-fixed points and thus for $n\ge 2$ the number of $\GG_a^n$-orbits on $\PP^n$ is infinite.
\end{example}

\begin{example}
The normalized additive action on the product $\PP^1\times\ldots\times\PP^1$ is given by
$$
(x_1,x_2\ldots,x_{2n-1},x_{2n})\mapsto (x_1+s_1x_2,x_2,\ldots,x_{2n-1}+s_nx_{2n},x_{2n}).
$$
This shows that the number of $\GG_a^n$-orbits on $\PP^1\times\ldots\times\PP^1$ is $2^n$.
\end{example}

\begin{example} \label{p2}
By~\cite[Proposition~3.2]{HT}, every additive action on $\PP^2$ has the form
$$
(x_1,x_2,x_3)\mapsto (x_1+as_1x_2+s_2x_3, x_2+bs_1x_3,x_3)
$$
with some fixed $a,b\in\KK$. For $a\ne 0$ and $b\ne 0$ we have one isomorphy class of non-normalized additive actions. Every such action has three orbits on $\PP^2$. With $a=0$ it degenerates to a normalized additive action, while with $b=0$ it degenerates to a $\GG_a^2$-action on $\PP^2$ with generic one-dimensional orbits, which is not an additive action.
\end{example}

\begin{example} \label{hir}
Let $X$ be the Hirzebruch surface $\FF_d$. Its fan is generated by the vectors
$$
p_1=(1,0), \quad p_2=(0,1), \quad p_3=(-1,d), \quad p_4=(0,-1)
$$
with some $d\in\ZZ_{\ge 1}$. The Cox ring $\KK[x_1,x_2,x_3,x_4]$ carries a $\ZZ^2$-grading given by $$
\dg(x_1)=(1,0), \quad \dg(x_2)=(0,1), \quad \dg(x_3)=(1,0), \quad \dg(x_4)=(d,1).
$$
Moreover, $X$ is obtained as a geometric quotient of $\widehat{X}=\KK^4\setminus(\{x_1=x_3=0\}\cup\{x_2=x_4=0\})$ by the action of the torus $H_X=(\KK^{\times})^2$, whose action is given by the $\ZZ^2$-grading.

In this case the Demazure roots are $(1,0)$, $(-1,0)$ and $(k,1)$ with $0\le k\le d$, and the corresponding homogeneous locally nilpotent derivations are given as
$$
x_1\frac{\partial}{\partial x_3}, \quad x_3\frac{\partial}{\partial x_1}, \quad
x_1^kx_2x_3^{d-k}\frac{\partial}{\partial x_4}.
$$
There are two complete collections of Demazure roots, namely $(1,0), (d,1)$ and $(-1,0),(0,1)$.
They define two normalized additive actions
$$
(x_1,x_2,x_3,x_4)\mapsto (x_1,x_2,x_3+s_1x_1,x_4+s_2x_1^dx_2)
$$
and
$$
(x_1,x_2,x_3,x_4)\mapsto (x_1+s_1x_3,x_2,x_3,x_4+s_2x_2x_3^d),
$$
which are rearranged by the automorphism $(x_1,x_2,x_3,x_4)\mapsto (x_3,x_2,x_1,x_4)$.

\smallskip

Applying the results of~\cite{De} or~\cite{Cox} we obtain
$$
\Aut(X)\cong\KK^{\times}\cdot\PSL(2)\rightthreetimes\,\GG_a^{d+1}.
$$

For $d=1$ a maximal unipotent subgroup of $\Aut(X)$ acts as
$$
(x_1,x_2,x_3,x_4)\mapsto (x_1+ax_3,x_2,x_3,x_4+bx_1x_2+cx_2x_3).
$$
Every point $[a:b]\in\PP^2$ defines a $\GG_a^2$-action on $\FF_1$, which is an additive action if
$[a:b]\ne[0:1]$. With $[a:b]=[1:0]$ we have a normalized additive action, while all other points
$[a:b]$ define pairwise isomorphic non-normalized additive actions. Thus there are exactly two isomorphy classes of additive actions on $\FF_1$. This result is obtained in~\cite[Proposition~5.5]{HT} by geometric arguments.
\end{example}

In~\cite[Theorem~6.2]{HT}, a classification of additive actions on smooth projective threefolds with Picard group of rank~1 is given. The following result may be considered as a generalization of this classification to the case of not necessary smooth varieties of arbitrary dimension in toric setting.

\begin{proposition}
Let $X$ be a complete toric variety with $\rk\Cl(X)=1$. Then $X$ admits an additive action if and only if $X$ is a weighted projective space $\PP(1,d_1,\ldots,d_n)$ for some positive integers $d_i$.
\end{proposition}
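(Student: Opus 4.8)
The strategy is to combine Corollary~\ref{cba} with the standard shape of the fan of a complete toric variety of Picard rank one. By Corollary~\ref{cba}, $X$ admits an additive action precisely when $\Sigma_X$ carries a complete collection of Demazure roots, so I first pin down $\Sigma_X$. Since $X$ is complete, the rays of $\Sigma_X$ span $N_\QQ$, hence $M\hookrightarrow\ZZ^{\Sigma_X(1)}$ with cokernel $\Cl(X)$, and $\rk\Cl(X)=1$ forces $|\Sigma_X(1)|=n+1$. Write $p_0,\dots,p_n$ for the primitive ray generators. Completeness gives $\cone(p_0,\dots,p_n)=N_\QQ$, which forces the unique (up to scaling) linear relation among the $p_i$ to have the form $\sum_{i=0}^n a_ip_i=0$ with all $a_i>0$. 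Hence every $n$ of the $p_i$ are linearly independent, $\Sigma_X$ is simplicial, and its maximal cones are exactly the $n+1$ cones $\cone(p_k:k\ne i)$, $i=0,\dots,n$; see \cite[Chapter~3]{CLS}. The weighted projective space $\PP(1,d_1,\dots,d_n)$ has a fan of this same shape: its lattice is $N'=\ZZ^{n+1}/\ZZ(1,d_1,\dots,d_n)$, the images $\bar\epsilon_1,\dots,\bar\epsilon_n$ of the last $n$ standard generators form a $\ZZ$-basis of $N'$, the rays are $\cone(\bar\epsilon_0),\dots,\cone(\bar\epsilon_n)$ with $\bar\epsilon_0=-\sum_{j=1}^n d_j\bar\epsilon_j$, and the maximal cones are obtained by deleting one ray.

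To prove that an additive action on $X$ forces $X\cong\PP(1,d_1,\dots,d_n)$, take a complete collection $e_1,\dots,e_n$ of Demazure roots of $\Sigma_X$. Its distinguished rays are $n$ of the $n+1$ rays, say those generated by $p_1,\dots,p_n$, and by the remark after Definition~\ref{deff} these vectors form a $\ZZ$-basis of $N$. Expanding $p_0$ in this basis and comparing with $\sum a_ip_i=0$ gives $p_0=-\sum_{j=1}^n d_jp_j$, where $d_j=a_j/a_0$ are positive integers. Transport $\Sigma_X$ along the lattice isomorphism $N\xrightarrow{\sim}\ZZ^n$, $p_j\mapsto\epsilon_j$: the rays become $\cone(\epsilon_1),\dots,\cone(\epsilon_n)$ together with $\cone\bigl(-\sum_j d_j\epsilon_j\bigr)$, and the maximal cones are obtained by deleting one ray. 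This is exactly the fan of $\PP(1,d_1,\dots,d_n)$ described above, so $X\cong\PP(1,d_1,\dots,d_n)$.

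For the converse I exhibit an additive action directly. Present $\PP(1,d_1,\dots,d_n)=(\KK^{n+1}\setminus\{0\})/\KK^{\times}$, where $t\in\KK^{\times}$ acts by $(y_0,\dots,y_n)\mapsto(ty_0,t^{d_1}y_1,\dots,t^{d_n}y_n)$. Then
$$
(s_1,\dots,s_n)\cdot[y_0:y_1:\dots:y_n]=[y_0:y_1+s_1y_0^{d_1}:\dots:y_n+s_ny_0^{d_n}]
$$
is a well-defined regular $\GG_a^n$-action, since each monomial $y_0^{d_i}$ has the same $\KK^{\times}$-weight as $y_i$, and on the open chart $\{y_0\ne 0\}\cong\KK^n$ with coordinates $y_i/y_0^{d_i}$ it acts by translations; so this orbit is open. (Equivalently, with $f_1,\dots,f_n\in M$ the basis dual to $\bar\epsilon_1,\dots,\bar\epsilon_n$, one checks that $e_j:=-f_j$ form a complete collection of Demazure roots of $\Sigma_{\PP(1,d_1,\dots,d_n)}$ and invokes Corollary~\ref{cba}: indeed $\langle\bar\epsilon_i,e_j\rangle=-\delta_{ij}$, the pairing of $e_j$ with the primitive generator of the remaining ray is $d_j/\gcd(d_1,\dots,d_n)\ge 0$, and condition~(2) in the definition of a Demazure root of a fan holds automatically since the fan is complete.)

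The step I expect to be the main obstacle is the structural claim in the first paragraph, namely that a complete toric variety of Picard rank one has exactly $n+1$ rays and maximal cones $\cone(p_k:k\ne i)$ with a strictly positive relation $\sum a_ip_i=0$. This is the only place where completeness of $X$ is genuinely used — the proposition is false without it — and deducing $a_i>0$ for all $i$ from $\cone(p_0,\dots,p_n)=N_\QQ$ (and ruling out any other maximal cones) requires a short convexity argument. Once this normal form for $\Sigma_X$ is established, matching it with the fan of $\PP(1,d_1,\dots,d_n)$ and writing down the $\GG_a^n$-action are both routine.
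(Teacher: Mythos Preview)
Your proof is correct and follows essentially the same route as the paper: use Corollary~\ref{cba} to get a complete collection of Demazure roots, so that $n$ of the $n+1$ primitive ray generators form a lattice basis and the remaining one has nonpositive (hence, by completeness, strictly negative) coordinates, which identifies $\Sigma_X$ with the fan of $\PP(1,d_1,\dots,d_n)$. Your version is more explicit about the fan structure (the $n+1$ rays and their maximal cones) and, for the converse, writes down the additive action directly---exactly the formula the paper records in the Remark following the proposition---whereas the paper simply asserts that the fan admits a complete collection; conversely, the paper reaches $|\Sigma_X(1)|=n+1$ via freeness of $\Cl(X)$ (from Section~\ref{sec3}), while you use only the rank hypothesis, which is slightly cleaner.
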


\begin{proof}
First assume that $X$ admits a (normalized) additive action. We know that the group $\Cl(X)$ is free and thus $\Cl(X)\cong\ZZ$. This allows to assume that the primitive vectors on the rays of the fan $\Sigma_X$ are
$$
(1,0,\ldots,0),\ldots,(0,0,\ldots,1),(-a_1,\ldots,-a_n), \ \ a_i\ge 0.
$$
Since the fan $\Sigma_X$ is complete we have $a_i>0$. Putting $d_i=a_i$ we obtain the fan of the weighted projective space~$\PP(1,d_1,\ldots,d_n)$, see~\cite[Section~2.2]{Fu}.

Conversely, such a fan possesses a complete collection of Demazure roots, and thus $\PP(1,d_1,\ldots,d_n)$ admits an additive action.
\end{proof}

\begin{remark}
In terms of Cox rings, a normalized additive action on $\PP(1,d_1,\ldots,d_n)$ is given by
$$
(x_1,\ldots,x_n,x_{n+1})\mapsto (x_1+s_1x_{n+1}^{d_1},\ldots,x_n+s_nx_{n+1}^{d_n},x_{n+1})
$$
and thus the complement of the open $\GG_a^n$-orbit on $\PP(1,d_1,\ldots,d_n)$ consists of
$\GG_a^n$-fixed points.
\end{remark}

Let us finish this paper with several questions and open problems.

\begin{problem}
Find all toric varieties which admit a normalized additive action with finitely many orbits.
\end{problem}

It is shown in \cite[Section~3]{HT} that the projective space $\PP^n$ with $n\ge 6$ admits infinitely many non-isomorphic additive actions. This rises the following problem.

\begin{problem}
Describe all additive actions on a given toric variety $X$. When the number of isomorphy classes of such actions on $X$ is infinite?
\end{problem}

As we have shown in this paper, if a toric variety $X$ of dimension $n$ admits an additive action,
then the Cox ring $\KK[x_1,\ldots,x_m]$ is $\ZZ^{m-n}$-graded, the degrees of $x_1,\ldots,x_{m-n}$ form a basis of the lattice $\ZZ^{m-n}$, and $\deg x_{m-n+1},\ldots,\deg x_m$ are non-positive combinations of these basis vectors. Moreover, if $X$ is either complete or affine, then these conditions are necessary and sufficient. These observations motivate the following problem.

\begin{problem}
Characterize algebraic varieties admitting an additive action in terms of their Cox rings.
\end{problem}

%%%%%%%%%%%%%%%%%%%%%%%%%%%%%%%%%%%%%%%%%%


\begin{thebibliography}{99}
%
\bibitem{Ar}
Ivan Arzhantsev. Flag varieties as equivariant compactifications of $\mathbb{G}^n_a$.
Proc. Amer. Math. Soc. 139 (2011), no.~3, 783--786
%
\bibitem{ADHL}
Ivan Arzhantsev, Ulrich Derenthal, J\"urgen Hausen, and Antonio Laface. \emph{Cox rings}.
Cambridge Studies in Adv. Math. 144, Cambridge University Press, New York, 2015
%
\bibitem{AKZ}
Ivan Arzhantsev, Karine Kuyumzhiyan, and Mikhail Zaidenberg. Flag varieties, toric varieties, and
suspensions: three instances of infinite transitivity. Sbornik: Math. 203 (2012), no.~7, 923--949
%
\bibitem{APS}
Ivan Arzhantsev, Alexander Perepechko, and Hendrik S\"uss. Infinite transitivity on universal
torsors. J. London Math. Soc. 89 (2014), no.~3, 762--778
%
\bibitem{AP}
Ivan Arzhantsev and Andrey Popovskiy. Additive actions on projective hypersurfaces. In:
Automorphisms in Birational and Affine Geometry, Proceedings in Mathematics and Statistics 79,
Springer, 2014, 17-33
%
\bibitem{AS}
Ivan Arzhantsev and Elena Sharoyko. Hassett-Tschinkel correspondence:
Modality and projective hypersurfaces. J. Algebra 348 (2011), no.~1, 217--232
%
\bibitem{CLT1}
Antoine Chambert-Loir and Yuri Tschinkel. On the distribution of points of bounded
height on equivariant compactifications of vector groups. Invent. Math. 148 (2002), no.~2, 421-–452
%
\bibitem{CLT2}
Antoine Chambert-Loir and Yuri Tschinkel. Integral points of bounded height on partial
equivariant compactifications of vector groups. Duke Math. J. 161 (2012), no.~15, 2799--2836
%
\bibitem{Cox}
David Cox. The homogeneous coordinate ring of a toric variety. J. Alg. Geom. 4 (1995), no.~1, 17--50
%
\bibitem{CLS}
David Cox, John Little, and Henry Schenck. \emph{Toric Varieties}. Graduate Studies in Math. 124, AMS, Providence, RI, 2011
%
\bibitem{De}
Michel Demazure. Sous-groupes algebriques de rang maximum du groupe de Cremona. Ann. Sci. Ecole
Norm. Sup. 3 (1970), 507--588
%
\bibitem{DL}
Ulrich Derenthal and Daniel Loughran.
Singular del Pezzo surfaces that are equivariant compactifications.
J.~Math. Sciences 171 (2010), no.~6, 714--724
%
\bibitem{Dev}
Rostislav Devyatov. Unipotent commutative group actions on flag varieties and nilpotent
multiplications. Transform. Groups 20 (2015), no.~1, 21--64
%
\bibitem{Fe}
Evgeny Feigin. $\mathbb{G}^M_a$ degeneration of flag varieties. Selecta Math. New Ser. 18 (2012),
no.~3, 513--537
%
\bibitem{Fr}
Gene Freudenburg. \emph{Algebraic theory of locally nilpotent derivations}.
Encyclopaedia Math. Sci. 136, Springer, Berlin, 2006
%
\bibitem{FH}
Baohua Fu and Jun-Muk Hwang. Uniqueness of equivariant compactifications of $\CC^n$
by a Fano manifold of Picard number $1$. Math. Research Letters 21 (2014), no.~1, 121--125
%
\bibitem{Fu}
William Fulton. \emph{Introduction to toric varieties}. Annales of Math. Studies 131,
Princeton University Press, Princeton, NJ, 1993
%
\bibitem{HT}
Brendan Hassett and Yuri Tschinkel. Geometry of equivariant compactifications of $\mathbb{G}^n_a$.
Int. Math. Res. Notices 1999 (1999), no.~22, 1211--1230
%
\bibitem{L1}
Alvaro Liendo. Affine $\mathbb{T}$-varieties of complexity one and locally nilpotent derivations. Transform. Groups 15 (2010), no.~2, 389--425
%
\bibitem{Oda}
Tadao Oda. \emph{Convex bodies and algebraic geometry: an introduction to toric varieties}. A Series of Modern Surveys in Math. 15, Springer Verlag, Berlin, 1988
%
\bibitem{Sh}
Elena Sharoiko. Hassett-Tschinkel correspondence and automorphisms of a quadric.
Sbornik Math. 200 (2009), no.~11, 1715--1729
%
\end{thebibliography}
\end{document}